\newcommand{\efe}{\mathbb{F}}
\newcommand{\FF}{\mathbb{F}}
\newcommand{\F}{\mathbb{F}}
\newcommand{\la}{\lambda}
\def\rank{\mathop{\rm rank}\nolimits}
\newtheorem{theo}{Theorem}[section]
\newtheorem{deff}[theo]{Definition}
\newtheorem{prop}[theo]{Proposition}
\newtheorem{example}[theo]{Example}
\DeclareMathOperator{\diag}{diag}
\DeclareMathOperator{\rev}{rev}
\begin{document}

	\title{Linearizations of matrix polynomials viewed as \\ Rosenbrock's system matrices}

	\author[uc3m]{Froil\'{a}n M. Dopico\fnref{fn1}}
	\ead{dopico@math.uc3m.es}

	\author[upv]{Silvia Marcaida\fnref{fn2}}
	\ead{silvia.marcaida@ehu.eus}
	
	\author[aalto]{Mar\'{i}a C. Quintana\fnref{fn3}}
	\ead{maria.quintanaponce@aalto.fi}
	
	\author[ucl]{Paul~Van~Dooren.\fnref{fn4}}
	\ead{paul.vandooren@uclouvain.be}

	\address[uc3m]{Departamento de Matem\'aticas,
		Universidad Carlos III de Madrid, Avda. Universidad 30, 28911 Legan\'es, Spain.}

	\fntext[fn1]{Supported by grant PID2019-106362GB-I00 funded by MCIN/AEI/ 10.13039/501100011033 and by the Madrid Government (Comunidad de Madrid-Spain) under the Multiannual Agreement with UC3M in the line of Excellence of University Professors (EPUC3M23), and in the context of the V PRICIT (Regional Programme of Research and Technological Innovation).}
	
		\address[upv]{Departamento de Matem\'{a}ticas,
			Universidad del Pa\'{\i}s Vasco UPV/EHU, Apdo. Correos 644, Bilbao 48080, Spain.}
	
		\fntext[fn2]{Supported by grant PID2021-124827NB-I00 funded by MCIN/AEI/ 10.13039/501100011033 and by “ERDF A way of making Europe” by the “European Union”, and by grant GIU21/020 funded by UPV/EHU.}
	
		\address[aalto]{Aalto University, Department of Mathematics and Systems Analysis, P.O. Box 11100, FI-00076, Aalto, Finland.}
	
		\fntext[fn3]{Supported by an Academy of Finland grant (Suomen Akatemian p\"{a}\"{a}t\"{o}s 331230) and by the Agencia Estatal de Investigación of Spain through grant PID2019-106362GB-I00 MCIN/ AEI/10.13039/501100011033/.}
	
		\address[ucl]{Department of Mathematical Engineering, Universit\'{e} catholique de Louvain, Avenue Georges Lema\^itre 4, B-1348 Louvain-la-Neuve, Belgium.}
		
		\fntext[fn4]{Supported by an Aalto Science Institute Visitor Programme}

		\begin{abstract} A well known method to solve the Polynomial Eigenvalue Problem (PEP) is via linearization. That is, transforming the PEP into a generalized linear eigenvalue problem with the same spectral information and solving such linear problem with some of the eigenvalue algorithms available in the literature. Linearizations of matrix polynomials are usually defined using unimodular transformations. In this paper we establish a connection between the standard definition of linearization for matrix polynomials introduced by Gohberg, Lancaster and Rodman and the notion of polynomial system matrix introduced by Rosenbrock. This connection gives new techniques to show that a matrix pencil is a linearization of the corresponding matrix polynomial arising in a PEP.

	\end{abstract}

	\begin{keyword}
		polynomial matrix \sep polynomial eigenvalue problem \sep linearization \sep polynomial system matrix
		
		\medskip\textit{AMS subject classifications}:  15A22, 15A54, 93B18, 93C05
	\end{keyword}

	\maketitle
	
	\section{Introduction}

	Let $\F$ be an arbitrary field, and let $\overline{\F}$ be the algebraic closure of $\FF$. $\efe[\la]$ denotes the ring of polynomials with coefficients in $\efe,$ and $\efe(\la)$ the field of rational functions over $\F[\la]$. The sets of $p\times m$ matrices with elements in $\efe,$ $\efe[\la]$ and $\efe(\la)$ are denoted by $\efe^{p\times m}$, $\efe[\la]^{p\times m}$ and $\efe(\la)^{p\times m}$, respectively. The elements of $\efe[\la]^{p\times m}$ are called matrix polynomials or polynomial matrices, and the elements of $\efe(\la)^{p\times m}$ are called rational matrices.
	
	A polynomial matrix $P(\lambda)\in\efe[\lambda]^{p\times m}$ can always be written in the form
	\begin{equation}\label{eq:matrix poly}
	P(\lambda) =  P_k \lambda^k +  P_{k-1} \lambda^{k-1} + \cdots + P_1  \lambda + P_0,
	\end{equation}
	where  $P_k,\hdots,P_1,P_0\in\mathbb{F}^{p\times m}$ with $P_k\neq 0$.
	The scalar $k$ is then called the degree of $P(\lambda)$, and it is denoted by $\deg P(\lambda)$. Polynomial matrices of degree $1$ or $0$, i.e., linear polynomial matrices, are called pencils.
	
	The (finite) eigenvalues of a polynomial matrix $P(\la)\in\F[\la]^{p\times m}$ are defined as the scalars $\la_0\in\overline{\F}$ such that $$\rank P(\la_0)< \displaystyle\max_{\mu\in\overline{\F}}\rank P(\mu).$$ The Polynomial Eigenvalue Problem (PEP) consists of finding the eigenvalues of $P(\la).$ If $P(\la)\in\F[\la]^{m\times m}$ is regular (i.e., square with $\det P(\la)\not\equiv 0$), the PEP is equivalent to the problem of finding scalars $\la_0\in\overline{\F}$ such that there exist nonzero constant vectors $x\in\overline{\efe}^{m\times 1}$ and $y\in\overline{\efe}^{m\times 1}$ satisfying $$P(\la_0)x=0 \quad\mbox{and}\quad y^{T}P(\la_0)=0,$$
	respectively.
	The vectors $x$ are called right eigenvectors associated with $\la_0$, and the vectors $y$ are called left eigenvectors associated with $\la_0$.
	
To solve the PEP and to develop a theory of regular polynomial matrices,  Gohberg, Lancaster and Rodman \cite{GohbergLancasterRodman09} introduced in the eighties the notion of \textit{linearization} of a matrix polynomial. Given a matrix polynomial $P(\lambda)$ of degree $k >1$, a linearization of $P(\la)$ is a pencil $L(\lambda):= \la L_1 +  L_0 $ such that there exist unimodular matrices (i.e., square polynomial matrices with nonzero constant determinant) $U_1(\lambda)$ and $V_1(\lambda)$ satisfying
	\begin{equation}\label{eq_linearization}
	U_1(\lambda)L(\lambda)V_1(\lambda) =
	\begin{bmatrix}
	P(\lambda) & 0 \\
	0 & I_s
	\end{bmatrix},
	\end{equation}
	where $I_s$ denotes the identity matrix of size an integer $s\geq 0$. It is known that $L(\la)$ has the same finite eigenvalues with the same partial multiplicities as $P(\la)$. $L(\la)$ is said to be a strong linearization of $P(\la)$ if, in
	addition, there exist unimodular matrices $U_2(\lambda)$ and $V_2(\lambda)$ satisfying
	\begin{equation}\label{eq_strong_lin}
	U_2(\lambda)\rev_1 L(\la)V_2(\lambda) =
	\begin{bmatrix}
	 \rev_{\ell} P(\la)& 0 \\
	0 & I_s
	\end{bmatrix},
	\end{equation}
	where $ \rev_1 L(\la):= \la L_0 + L_1 $ and	$\rev_{\ell} P(\la) :=\la^{\ell} P(1/\la)$ for some $\ell \geq \deg P(\la)$ \cite{GohbergKaashoekRodman88}. In this case, $L(\la)$ has the same finite and infinite eigenvalues with the same partial multiplicities as $P(\la)$.  Here we remark that an equivalent definition of strong linearization is obtained if the matrices $U_2(\lambda)$ and $V_2(\lambda)$ in \eqref{eq_strong_lin} are required to be just rational matrices invertible at $0$ (i.e., $U_2(0)$ and $V_2(0)$  are invertible) instead of unimodular (see \cite[Section 3]{strong} or the proof of Proposition \ref{prop_strong_lin}). In that case we say that $ \rev_1 L(\la)$ and $\diag ( \rev_{\ell} P(\la),I_s)$ are equivalent at $0$.
	
In this paper the terms linearization and strong linearization of a matrix polynomial always refer to the Gohberg, Lancaster and Rodman's definitions in \eqref{eq_linearization} and \eqref{eq_strong_lin}, though other non-equivalent definitions of linearizations are available in the literature \cite{strongminimal,stronglyminimalSIMAX}. In the last two decades, definitions \eqref{eq_linearization} and \eqref{eq_strong_lin} have been very influential in many families of linearizations that have been developed with the goal of solving unstructured and structured PEPs (see, for instance, \cite{aclan-2009,greeks,fiedler_structured,fiedler_structured2,dtdm-fiedler,BKL, htm,lawrence-perez,mmmmVS,mmmmGOOD,noferini-perez,greeks2} among many other references on this topic). In fact, in most of these references, the proofs that certain families of pencils are linearizations and strong linearizations of a polynomial matrix $P(\la)$ are performed by finding explicitly the unimodular matrices $U_1 (\la), V_1(\la), U_2 (\la)$ and $V_2(\la)$ in \eqref{eq_linearization} and \eqref{eq_strong_lin}.\footnote{Nevertheless, we emphasize that a recent simple alternative to prove that a certain pencil is a (strong) linearization of a polynomial matrix is to prove that such a pencil is one of the (strong) block minimal bases pencils introduced in \cite{BKL}. See \cite{fiedler}.}
	
	However, already in the seventies Rosenbrock \cite{Rosen70} introduced the notion of \textit{polynomial system matrix} $S(\la)$ of a rational matrix $G(\la)\in\FF(\la)^{p\times m}$.
	That is, a matrix polynomial of the form
	\begin{equation}\label{eq_pol_sys_mat}
	S(\la):=\begin{bmatrix}
	A(\la) & B(\la)\\
	-C(\la) & D(\la)
	\end{bmatrix}  \in \FF [\la]^{(n+p) \times (n+m)}
	\end{equation}
	with $A(\la)\in\FF[\la]^ {n\times n}$ regular, such that its Schur complement with respect to $A(\la)$ is $G(\la)$, i.e., $$G(\la)= D(\la) + C(\la)A(\la)^{-1}B(\la).$$ The rational matrix $G(\la)$ is called the transfer function matrix of $S(\la)$ and the matrix polynomial $A(\la)$ is called the state matrix of $S(\la).$ Although the state matrix $A(\la)$ will appear in the $(1,1)$-block of $S(\la)$ in the theoretical results of this paper, we emphasize that in practice it can be at any place in $S(\la)$. The important property is that $G(\la)$ is the Schur complement of $A(\la)$ in $S(\la)$.
	
The importance of polynomial system matrices is related to the fact that under the minimality conditions described below, Rosenbrock showed that polynomial system matrices contain the pole and zero information of their transfer function matrices \cite{Rosen70}. Poles and zeros of rational matrices are defined through the notion of the Smith--McMillan form, that we state in what follows (see \cite{McMi2}, or \cite{Rosen70} for a more recent reference).

	\begin{deff}[Smith--McMillan form]
		For any rational matrix $G(\la)\in\efe(\la)^{p\times m}$ there exist unimodular matrices $U_{1}(\la)\in\efe[\la]^{p\times p}$ and $U_{2}(\la)\in\efe[\la]^{m\times m}$ such that
		\begin{equation}\label{eq:globSM}
		U_{1}(\la)G(\la)U_{2}(\la)=\left[\begin{array}{cc}
		\diag\left(\dfrac{\epsilon_1(\la)}{\psi_1(\la)},\ldots, \dfrac{\epsilon_r(\la)}{\psi_r(\la)}\right)&0 \\
		0& 0_{(p-r)\times (m-r)}
		\end{array}\right],
		\end{equation}
		where $r$ is the normal rank of $ G(\la)$ and, for $i=1,\ldots, r$, $\dfrac{\epsilon_i(\la)}{\psi_i(\la)}$ are nonzero irreducible rational functions with $\epsilon_i(\la)$ and $\psi_i(\la)$ monic polynomials (i.e., with leading coefficient equal to 1) that satisfy the divisibility chains $\epsilon_1(\la)\mid\cdots\mid\epsilon_r(\la)$ and $\psi_r(\la)\mid\cdots\mid\psi_1(\la)$. The diagonal matrix in (\ref{eq:globSM}) is called the Smith--McMillan form of $G(\la)$.
	\end{deff}
	
The rational functions $\epsilon_i(\la)/\psi_i(\la)$ in \eqref{eq:globSM} are called the invariant rational functions of $G(\la)$ and the finite poles and zeros of $G(\la)$ are the roots in $\overline\F$ of the denominators and numerators of the invariant rational functions, respectively.  In addition, for any $\la_0\in\overline\F$ each numerator $\epsilon_i(\la)$ can be factored as $\epsilon_i(\la)=(\la-\la_0)^{\alpha_i}p_{i}(\la)$ with $p_{i}(\la_0)\neq 0$ and $\alpha_i\geq 0$. The factors $(\la-\la_0)^{\alpha_i}$ with $\alpha_i\neq 0$ are called the zero elementary divisors at the finite zero $\la_0$. Analogously, for any $\la_0\in\overline\F$ each denominator $\psi_i(\la)$ can be factored as $\psi_i(\la)=(\la-\la_0)^{\beta_i}q_{i}(\la)$ with $q_{i}(\la_0)\neq 0$ and $\beta_i\geq 0$. The factors $(\la-\la_0)^{\beta_i}$ with $\beta_i\neq 0$ are called the pole elementary divisors  at the finite pole $\la_0$. If $G(\la)$ is a polynomial matrix then $\psi_i(\la)=1,$ for $i=1,\ldots, r$, and the diagonal matrix in (\ref{eq:globSM}) is just called the Smith form of $G(\la)$. In this polynomial case, there are no finite poles, the finite zeros are usually called finite eigenvalues, and the zero elementary divisors at a finite eigenvalue are just called elementary divisors at that eigenvalue.
	
We now introduce the notion of minimality of a polynomial system matrix, in order to relate its zeros with the poles and zeros of its transfer function matrix. A polynomial system matrix $S(\la)$ as in \eqref{eq_pol_sys_mat} is said to be minimal if
	\begin{equation}  \label{minimal}
	\rank\left[\begin{array}{ccc} A(\la_0) & B(\la_0)  \end{array}\right] = \rank \left[\begin{array}{ccc} A(\la_0) \\ -C(\la_0)   \end{array}\right]=n
	\end{equation}
	for all $\la_0\in\overline{\F}$. Then, Rosenbrock proved the following result \cite{Rosen70} about the recovery of the pole and zero information of a rational matrix from a minimal polynomial system matrix.
	
	\begin{theo}\label{th:Rosen} Let $S(\la)$ as in \eqref{eq_pol_sys_mat} be a minimal polynomial system matrix, with state matrix $A(\la)$, whose transfer function matrix is $G(\la).$  Then the elementary divisors at the finite eigenvalues of $A(\la)$ are the pole elementary divisors at the finite poles of $G(\la)$, and the elementary divisors at the finite eigenvalues of $S(\la)$ are the zero elementary divisors  at the finite zeros of $G(\la)$.
	\end{theo}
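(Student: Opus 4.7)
The plan is to compute, for each $\la_0 \in \overline{\F}$, the local invariants at $\la_0$ of $S(\la)$ and $A(\la)$ and compare them with those of $G(\la)$. The key algebraic tool is the rational Schur--complement identity
\begin{equation*}
\begin{bmatrix} I_n & 0 \\ C(\la) A(\la)^{-1} & I_p \end{bmatrix} S(\la) \begin{bmatrix} I_n & -A(\la)^{-1} B(\la) \\ 0 & I_m \end{bmatrix} = \begin{bmatrix} A(\la) & 0 \\ 0 & G(\la) \end{bmatrix},
\end{equation*}
whose outer factors are rational matrices of determinant one whose only poles are at the roots of $\det A(\la)$.

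For any $\la_0$ that is not a root of $\det A(\la)$, both outer factors are rational matrices invertible at $\la_0$ (i.e., they are units in the local ring at $\la_0$), so the local Smith--McMillan form of $S(\la)$ at $\la_0$ coincides with that of $\diag(A(\la), G(\la))$. Since $A(\la_0)$ is invertible, $A(\la)$ contributes no local invariants and $G(\la) = D(\la) + C(\la)A(\la)^{-1}B(\la)$ is analytic at $\la_0$. Thus $\la_0$ is not a pole of $G(\la)$, and the zero elementary divisors of $S(\la)$ at $\la_0$ coincide with those of $G(\la)$. This settles both assertions at every $\la_0$ that is not a finite eigenvalue of $A(\la)$.

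The main obstacle is the analysis at a $\la_0$ that is a finite eigenvalue of $A(\la)$: there $A(\la)^{-1}$ blows up, the outer factors above fail to be invertible at $\la_0$, and the comparison of local invariants breaks down. This is precisely the point where the minimality hypothesis \eqref{minimal} is indispensable. The condition $\rank[A(\la_0), B(\la_0)]=n$ for every $\la_0$ is equivalent to saying the $n \times (n+m)$ polynomial matrix $[A(\la), B(\la)]$ has Smith form $[I_n, 0]$, hence can be unimodularly reduced to $[I_n, 0]$; dually, $[A(\la); -C(\la)]$ can be unimodularly reduced to $[I_n; 0]$. Applying these reductions through transformations of Rosenbrock strict system-equivalence type (block-triangular unimodular transformations that leave the transfer function $G(\la)$ invariant), one reduces $S(\la)$ locally to a block-diagonal form $\diag(I_n, W(\la))$ in which the Smith form of $W(\la)$ at $\la_0$ records the zero elementary divisors of $G(\la)$ at $\la_0$, while the original $A(\la)$ absorbs the pole information of $G$ at $\la_0$ via the relation $G = D + CA^{-1}B$.

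The delicate step, which I expect to be the main obstacle, is reconciling this local reduction of $S(\la)$ (driven by minimality alone) with the global rational Schur-complement identity (which involves $A(\la)^{-1}$), so that simultaneously one concludes \emph{both} that the elementary divisors of $A(\la)$ at $\la_0$ equal the pole elementary divisors of $G(\la)$ at $\la_0$ and that the elementary divisors of $S(\la)$ at $\la_0$ equal the zero elementary divisors of $G(\la)$ at $\la_0$. Careful bookkeeping of the pole-zero exchanges between $A$ and $G$ at $\la_0$, mediated by the two minimality conditions, should close the argument.
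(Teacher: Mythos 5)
There is a genuine gap. First, note that the paper does not prove this statement at all: it is quoted from Rosenbrock \cite{Rosen70}, so the only question is whether your argument stands on its own, and it does not. The part you actually prove — that at any $\la_0$ with $\det A(\la_0)\neq 0$ the outer factors in the Schur-complement identity are invertible over the local ring at $\la_0$, so the local structure of $S(\la)$ coincides with that of $\diag(A(\la),G(\la))$ and hence with that of $G(\la)$ — is correct, but it is the trivial half: it needs no minimality and says nothing about poles. The entire content of the theorem lives at the finite eigenvalues of $A(\la)$, and for that case you only describe the difficulty and end with ``careful bookkeeping \dots should close the argument,'' which is a statement of intent, not a proof.

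Moreover, the route you sketch for the hard case is flawed. Reducing $\left[\begin{smallmatrix} A(\la) & B(\la)\end{smallmatrix}\right]$ to $\left[\begin{smallmatrix} I_n & 0\end{smallmatrix}\right]$ requires unimodular column operations that mix the columns of $A(\la)$ and $B(\la)$; such transformations are \emph{not} of strict-system-equivalence type, since the admissible right factors $\left[\begin{smallmatrix} N(\la) & Y(\la)\\ 0 & I_m\end{smallmatrix}\right]$ cannot use $B$ to modify $A$, so they do not leave $G(\la)$ invariant as you claim. Worse, if $S(\la)$ could be reduced by transfer-function-preserving unimodular transformations to $\diag(I_n,W(\la))$ with $W(\la)$ polynomial, then $G(\la)=W(\la)$ would be polynomial, which is false in general precisely when $A(\la)$ has eigenvalues (i.e., when $G$ has poles); and even granting some local block-diagonal reduction, the assertion that the Smith form of $W(\la)$ at $\la_0$ ``records the zero elementary divisors of $G(\la)$'' is exactly what has to be proved, so the plan is circular at the decisive point. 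A complete argument must make the two conditions in \eqref{minimal} do real work at $\la_0$: for instance, Rosenbrock's own route constructs from the Smith--McMillan form of $G(\la)$ one explicit minimal polynomial system matrix for which both conclusions are visible by inspection, and then invokes the (nontrivial) theorem that minimal system matrices with the same transfer function matrix are strictly system equivalent, observing that strict system equivalence preserves the Smith forms of both the state matrix and the whole system matrix; alternatively, one can give a direct local argument over the ring of rational functions defined at $\la_0$, as in the literature on local linearizations cited in the paper. None of this is supplied in your proposal.
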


Theorem \ref{th:Rosen} is the foundation of the algorithms for solving numerically rational eigenvalue problems via ``linearization'', i.e., by constructing linear minimal polynomial system matrices of a given rational matrix $G(\la)$ and applying generalized eigenvalue algorithms to such system matrices. This approach is well known at least since the early eighties \cite{vandoorenIEEE} and has also been used in the specific case in which $G(\la)$ is a polynomial matrix \cite{vandooren-dewilde}. However, in these references, as well as in others using that approach, no attempt was made for relating linear minimal Rosenbrock's  polynomial system matrices of a polynomial matrix to the definition of linearization given by Gohberg, Lancaster and Rodman.
	
In this context a key remark is that in the particular case of $A(\la)$ in \eqref{eq_pol_sys_mat} being unimodular, $G(\la)$ has no finite poles, i.e., $G(\la)$ is a polynomial matrix, and condition \eqref{minimal} is satisfied. In addition, we have the following unimodular equivalence:
\begin{equation}\label{eq_unimodular}
 \underbrace{\begin{bmatrix}
 	C(\la)A(\la)^{-1} & I_p\\
 	A(\la)^{-1} & 0
 	\end{bmatrix}}_{\mbox{unimodular}} \begin{bmatrix}
A(\la) & B(\la)\\
-C(\la) & D(\la)
\end{bmatrix} \underbrace{\begin{bmatrix}
-A(\la)^{-1} B(\la) & I_n\\
I_m & 0
\end{bmatrix}}_{\mbox{unimodular}}=\begin{bmatrix}
G(\la) & \\
& I_n
\end{bmatrix}.
\end{equation}
Therefore, if $S(\la)$ is a pencil and $A(\la)$ is unimodular then $S(\la)$ is a linearization for the matrix polynomial $G(\la)$ according to Gohberg, Lancaster and Rodman's definition.


The purpose of this paper is to show that many of the linearizations for matrix polynomials in the literature are actually linear polynomial system matrices with unimodular state matrices of the corresponding matrix polynomial. Moreover, this property establishes a new tool to determine if a pencil $L(\la)$ is a linearization of a matrix polynomial. Namely, by computing the transfer function matrix of $L(\la)$ (i.e., the Schur complement) with respect to any unimodular submatrix $A(\la)$ that can be identified in the pencil. We aim to analyze a sufficiently large number of families of linearizations with this approach to convince the reader of its interest. However, we remark that it is not always possible to identify a unimodular submatrix $A(\la)$ inside a linearization such that the Schur complement with respect to $A(\la)$ is the corresponding polynomial matrix. Thus, it is not always possible to use this approach in the study of linearizations. We also emphasize that we are not stating that the new tool is better than previous techniques. But sometimes it may be simpler because the unimodular submatrix can be easily identified and the computation of the corresponding transfer function matrix is straightforward.

This paper is organized as follows.	In Section \ref{sec_frobenius}, we first revisit from the point of view of Rosenbrock's system matrices the Frobenius companion form or the (first) companion form \cite{GohbergKaashoekRodman88, GohbergLancasterRodman09}, that is one of the most classic linearizations.  Section \ref{sec_aux_results} includes some general auxiliary results about polynomial system matrices of matrix polynomials. Moreover, we discuss in Section \ref{sec_aux_results} how to recover easily the eigenvectors of a matrix polynomial from those of a linearization when it is viewed as a polynomial system matrix. Then, we consider in Section \ref{sec_comrade} the family of ``comrade'' linearizations \cite{aclan-2009,barnett,ortho}, that are particular cases of the CORK linearizations \cite{CORK} studied in Section \ref{sec_CORK}. We also analyze the family of (extended) block Kronecker linearizations \cite{BKL} in Sections \ref{sec_BK} and \ref{sec_EBK},  which includes, modulo permutations, all the families of Fiedler linearizations \cite{fiedler}.  We present a note in Section \ref{sec_rational} on how to use the Rosenbrock's system matrix approach to linearizations of polynomial matrices to construct easily linearizations for rational matrices, a task that has not been always easy via other approaches. Finally, some conclusions and possible lines of future research are presented in Section \ref{sec_conclusions}.

\section{Frobenius companion form}\label{sec_frobenius}
 Given a matrix polynomial $P(\la) \in \efe [\la]^{p\times m}$ written in terms of the monomial basis as in \eqref{eq:matrix poly}, the Frobenius companion form is the following pencil
\[
C_1(\lambda) :=
\left[
\begin{array}{ccccc}
\la P_k + P_{k-1} & P_{k-2} & \cdots & P_1 & P_0 \\
-I_m  & \la I_m \\ & \ddots & \ddots\\
& & \ddots & \la I_m\\
& & & -I_m & \la I_m
\end{array}
\right].
\]
One of the most basic results in the theory of matrix polynomials is that $C_1(\lambda)$ is a strong linearization of $P(\la)$ \cite{GohbergKaashoekRodman88}. The classical proofs of this result proceed by finding the four unimodular matrices that satisfy \eqref{eq_linearization} and \eqref{eq_strong_lin}. We now show that $C_1(\lambda)$ can be seen as a polynomial system matrix of $P(\la)$ with unimodular state matrix, which in turn implies that $C_1(\lambda)$ is a linearization of $P(\la)$.
\subsection{Frobenius companion form as a Rosenbrock's system matrix}
 We consider the following partition:
\begin{equation} \label{eq_frobenius}
C_1(\lambda) =
\left[
\begin{array}{cccc|c}
\la P_k + P_{k-1} & P_{k-2} & \cdots & P_1 & P_0 \\ \hline
-I_m  &  \la I_m &  &  & \\  &  \ddots &  \ddots &  & \\
&  & \ddots &  \la I_m &\\
& &  &  -I_m & \la I_m
\end{array}
\right] =: \begin{bmatrix}
-C(\la) & D(\la)\\
A(\la) & B(\la)
\end{bmatrix},
\end{equation}
as a Rosenbrock's system matrix with state matrix $A(\la)$. Then, $A(\la)$ is clearly unimodular and the transfer function matrix is
\begin{align}\label{eq_transfer_frobenius}
D(\la) + C(\la)A(\la)^{-1}B(\la)
= P_0 - \begin{bmatrix}
\la P_k + P_{k-1} & P_{k-2} & \cdots & P_1
\end{bmatrix}A(\la)^{-1}B(\la).
\end{align}	
To compute \eqref{eq_transfer_frobenius}, we consider the polynomial vector containing the elements of the monomial basis. Namely,
\begin{equation} \label{eq.defcapitalLambda}
\Lambda_{k-1}(\lambda):=\begin{bmatrix}
\la^{k-1} &
\la^{k-2}  &
\cdots &
\la  &
1
\end{bmatrix}^{T}.
\end{equation}
Then, observe that
\begin{equation} \label{eq.linearrelFRO}
\begin{bmatrix}
A(\la) & B(\la)
\end{bmatrix} (\Lambda_{k-1}(\lambda)\otimes I_m) = 0.
\end{equation}
Therefore, $A(\la)
\begin{bmatrix}
\la^{k-1} \, I_m &
\la^{k-2} \, I_m &
\cdots &
\la I_m
\end{bmatrix}^{T} + B(\la) = 0$ and
\begin{equation}\label{eq_invab}
A(\la)^{-1} B(\la) = -
\begin{bmatrix}
\la^{k-1} \, I_m &
\la^{k-2} \, I_m &
\cdots &
\la I_m
\end{bmatrix}^{T}.	
\end{equation}
Finally, by \eqref{eq_transfer_frobenius} and \eqref{eq_invab}, we obtain that the transfer function matrix of \eqref{eq_frobenius} is $P(\la)$.
\subsection{Reversal of the Frobenius companion form as a Rosenbrock's system matrix}
Now, we consider the reversal $\rev_1 C_{1}(\la)$ and the following partition:
\begin{equation} \label{eq_frobenius_rev}
\rev_1 C_1(\lambda) =
\left[
\begin{array}{c|cccc}
 P_k + \la P_{k-1} & \la P_{k-2} & \cdots & \la P_1 & \la P_0 \\ \hline
-\la I_m  &   I_m &  &  & \\  &  \ddots &  \ddots &  & \\
&  & \ddots &   I_m &\\
& &  &  -\la I_m &  I_m
\end{array}
\right] =: \begin{bmatrix}
D_r(\la) & -C_r(\la) \\
B_r(\la) & A_r(\la)
\end{bmatrix}.
\end{equation}
Then, we have that $A_r(\la)$ is unimodular, and the transfer function matrix of $\rev_1 C_1(\lambda)$ with the partition in \eqref{eq_frobenius_rev} is
\begin{align*}\label{eq_transfer_rev_frobenius}
D_r(\la) + C_r(\la)A_r(\la)^{-1}B_r(\la)
= P_k + \la P_{k-1} - \begin{bmatrix}
  \la P_{k-2} & \cdots & \la P_1 & \la P_0
\end{bmatrix}A_r(\la)^{-1}B_r(\la).
\end{align*}	
Taking into account that
\begin{equation} \label{eq.linearrelREVFRO}
\begin{bmatrix}
 B_r(\la) & A_r(\la)
\end{bmatrix} (\rev_{k-1}\Lambda_{k-1}(\lambda)\otimes I_m) = 0,
\end{equation}
we obtain that
\begin{equation*}\label{eq_invab_rev}
A_r(\la)^{-1} B_r(\la) = -
\begin{bmatrix}
\la I_m &
\cdots &
\la^{k-2} \, I_m &
\la^{k-1} \, I_m
\end{bmatrix}^{T}.	
\end{equation*}
Therefore, the transfer function matrix is
$$D_r(\la) + C_r(\la)A_r(\la)^{-1}B_r(\la)= P_k + \la P_{k-1} + \la^2 P_{k-2} + \cdots + \la^{k}P_0= \rev_k P(\lambda).$$ It then follows that $C_{1}(\la)$ is a strong linearization of $P(\la)$.

Notice that although the expression of the transfer function matrix involves $A(\la)^{-1}$, we computed very easily the transfer functions of $C_1(\la)$ and of $\rev_1 C_1(\lambda)$ without computing explicitly $A(\la)^{-1}$.  For that, we used the linear relation of the monomial basis in \eqref{eq.linearrelFRO} and \eqref{eq.linearrelREVFRO}. We will see that similar linear relations arise in other linearizations that can also be viewed as linear polynomial system matrices with unimodular state matrices $A(\la)$, and it allows to compute their transfer function matrices without computing explicitly $A(\la)^{-1}$.


\section{Auxiliary results: polynomial system matrices of matrix polynomials}
\label{sec_aux_results}
	
	The discussion in the introduction around \eqref{eq_unimodular} is summarized for the particular case of linear system matrices in the following Proposition \ref{prop_lin}. The proof follows from \eqref{eq_unimodular}.
	
		\begin{prop}\label{prop_lin}	Let $G(\la) \in\F(\la)^{p\times m}$ and let
			\begin{equation*}\label{eq_lin1}
			\mathcal{L}(\la)=\left[\begin{array}{cc}
			A_1 \la +A_0 &B_1 \la +B_0\\-(C_1 \la +C_0)&D_1 \la +D_0
			\end{array}\right]\in\F[\la]^{(n+p)\times (n+m)}
			\end{equation*} be a linear polynomial system matrix of $G(\la)$, with state matrix $A(\la):=A_1 \la +A_0$. If $A(\la)$ is unimodular then $G(\la)$ is a matrix polynomial and $	\mathcal{L}(\la)$ is a linearization of $G(\la)$.
		\end{prop}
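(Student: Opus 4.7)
The plan is to invoke the unimodular equivalence \eqref{eq_unimodular} specialized to the linear case, i.e., with $A(\la) = A_1\la + A_0$, $B(\la) = B_1\la + B_0$, $C(\la) = C_1\la + C_0$, and $D(\la) = D_1\la + D_0$, so that the polynomial system matrix $S(\la)$ of the general formula is now the pencil $\mathcal{L}(\la)$.

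The first step is to verify that the two outer factors
\[
U(\la):=\begin{bmatrix} C(\la)A(\la)^{-1} & I_p\\ A(\la)^{-1} & 0 \end{bmatrix}
\quad\text{and}\quad
V(\la):=\begin{bmatrix} -A(\la)^{-1}B(\la) & I_n\\ I_m & 0 \end{bmatrix}
\]
are unimodular polynomial matrices. Since $A(\la)$ is unimodular by hypothesis, $A(\la)^{-1}$ is itself a polynomial matrix, so every entry of $U(\la)$ and $V(\la)$ lies in $\F[\la]$. A block row (respectively column) swap brings each matrix into block-triangular form, giving $\det U(\la) = \pm\det A(\la)^{-1}$, which is a nonzero constant of $\F$, and $\det V(\la) = \pm 1$; this confirms that both outer factors are unimodular.

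With this in hand, \eqref{eq_unimodular} reads
\[
U(\la)\,\mathcal{L}(\la)\,V(\la) \;=\; \begin{bmatrix} G(\la) & 0\\ 0 & I_n \end{bmatrix}.
\]
The left-hand side is a product of polynomial matrices and is therefore polynomial; hence $G(\la)$ must itself be a matrix polynomial. But then the displayed identity is exactly the defining equation \eqref{eq_linearization} of a linearization, with $s=n$ and $P(\la)=G(\la)$. Both claims of the proposition therefore follow simultaneously.

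The only step that requires any genuine work is the unimodularity check for the outer factors, and this reduces to a one-line block-determinant computation. There is no substantive obstacle: the proposition is precisely the corollary of \eqref{eq_unimodular} for pencils with unimodular state matrix that the introductory discussion was foreshadowing.
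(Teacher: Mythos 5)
Your proof is correct and is essentially the paper's own argument: the paper proves Proposition \ref{prop_lin} simply by citing the identity \eqref{eq_unimodular}, and you have filled in exactly the details that citation presupposes (polynomiality and unimodularity of the outer factors when $A(\la)$ is unimodular, and the resulting match with the definition \eqref{eq_linearization} with $s=n$).
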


In the next Proposition \ref{prop_unimod} we give a necessary and sufficient condition for the state matrix $A(\la)$ of a polynomial system matrix to be unimodular. This result can be useful in problems where the transfer function matrix $G(\la)$ is polynomial and computing $G(\la)$ is easier than  proving that $A(\la)$ is unimodular.
	
	\begin{prop}\label{prop_unimod}	Let $G(\la) \in\F(\la)^{p\times m}$ and let
		\begin{equation*}\label{eq_lin}
		S(\la)=\begin{bmatrix}
		A(\la) & B(\la)\\
		-C(\la) & D(\la)
		\end{bmatrix} \in\F[\la]^{(n+p)\times (n+m)}
		\end{equation*} be a polynomial system matrix of $G(\la)$, with state matrix $A(\la)$. Then $A(\la)$ is unimodular if and only if $	S(\la)$ is minimal and $G(\la)$ is a matrix polynomial.
	\end{prop}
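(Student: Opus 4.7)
The plan is to prove the two directions separately, leveraging the unimodular equivalence \eqref{eq_unimodular} for one direction and Rosenbrock's Theorem \ref{th:Rosen} for the other.

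For the forward implication, suppose $A(\la)$ is unimodular. Then $\det A(\la_0)\neq 0$ for every $\la_0\in\overline{\F}$, so $A(\la_0)$ has rank $n$. Since the rows of $[A(\la_0),\,B(\la_0)]$ contain those of $A(\la_0)$, and since $[A(\la_0),\,B(\la_0)]$ has only $n$ rows in total, its rank equals $n$; the analogous argument with columns handles $[A(\la_0);\,-C(\la_0)]$, so condition \eqref{minimal} holds and $S(\la)$ is minimal. The fact that $G(\la)$ is polynomial is then immediate from \eqref{eq_unimodular}: since $A(\la)^{-1}$ is polynomial, the block-diagonal matrix $\diag(G(\la),I_n)$ is the product of three polynomial matrices, hence polynomial, and therefore so is $G(\la)$.

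For the converse, suppose $S(\la)$ is minimal and $G(\la)$ is a polynomial matrix. Since $G(\la)$ is polynomial its Smith--McMillan form has all denominators equal to 1, so $G(\la)$ has no finite poles and, in particular, no pole elementary divisors at any $\la_0\in\overline{\F}$. By Theorem \ref{th:Rosen} applied to the minimal system matrix $S(\la)$, the elementary divisors at the finite eigenvalues of $A(\la)$ coincide with the pole elementary divisors of $G(\la)$, which are absent. Hence $A(\la)$ has no finite eigenvalues in $\overline{\F}$.

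To conclude, I would use that $A(\la)\in\F[\la]^{n\times n}$ is regular by the definition of polynomial system matrix, so $\det A(\la)\in\F[\la]$ is a nonzero polynomial whose roots in $\overline{\F}$ are precisely the finite eigenvalues of $A(\la)$. Since there are no such roots, the fundamental theorem of algebra (applied over the algebraically closed field $\overline{\F}$) forces $\det A(\la)$ to be a nonzero constant, so $A(\la)$ is unimodular. I do not expect a real obstacle here; the only delicate point is being explicit that ``no finite eigenvalues'' means ``no roots of $\det A(\la)$ in $\overline{\F}$'', which rests on $A(\la)$ being square and regular so that its normal rank is $n$.
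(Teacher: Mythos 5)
Your proof is correct and follows essentially the same route as the paper: the forward direction is the observation the paper dismisses as ``clear'' (unimodularity of $A(\la)$ gives both the rank conditions \eqref{minimal} and polynomiality of $G(\la)$ via $A(\la)^{-1}$ being polynomial), and the converse is exactly the paper's argument via Theorem \ref{th:Rosen}, concluding that $\det A(\la)$ has no roots in $\overline{\F}$ and is therefore a nonzero constant. You simply spell out the details more explicitly than the paper does.
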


\begin{proof} It is clear that if $A(\la)$ is unimodular then $G(\la)$ is a matrix polynomial and $S(\la)$ is minimal. That is, condition
	\eqref{minimal} holds. Conversely, if $G(\la)$ is a matrix polynomial and $S(\la)$ is minimal then $A(\la)$ has no finite eigenvalues, since the finite eigenvalues of $A(\la)$ would be the finite poles of $G(\la)$ by Theorem \ref{th:Rosen}. But $G(\la)$ has no finite poles since it is a matrix polynomial. Therefore, $\det A(\la)$ is constant, which implies that $A(\la)$ is unimodular.
\end{proof}

We could apply Proposition \ref{prop_lin} to the reversal $\rev_1\mathcal{L}(\la)$ to see that a linearization $\mathcal{L}(\la)$ is, in addition, a strong linearization.  For that, we may need to select another submatrix of $\rev_1\mathcal{L}(\la)$ as appropriate state matrix, with a different partition from the one considered in $\mathcal{L}(\la)$. However, selecting a unimodular submatrix in $\rev_1\mathcal{L}(\la)$ such that the Schur complement with respect to it is $\rev_{\ell} P(\la)$, for some $\ell\geq \deg P(\la)$, is not always possible. In that case, we can also try to apply the following Proposition \ref{prop_strong_lin}, which requires milder conditions.

\begin{prop}\label{prop_strong_lin} Let $\mathcal{L}(\la)$ be a linearization of a polynomial matrix $P(\la)\in\F[\la]^{p\times m}$. Assume that we can write
	\begin{equation*}\label{eq_lin_strong}
	\rev_1\mathcal{L}(\la)=\left[\begin{array}{cc}
	\widetilde A_1 \la + \widetilde A_0 &\widetilde B_1 \la +\widetilde B_0\\-(\widetilde C_1 \la +\widetilde C_0)&\widetilde D_1 \la +\widetilde D_0
	\end{array}\right]\in\F[\la]^{(n+p)\times (n+m)}
	\end{equation*} as a linear polynomial system matrix with state matrix $\widetilde A(\la):=\widetilde A_1 \la + \widetilde A_0$. If $ \widetilde A(\la)$ is invertible at $0$, i.e., if $\widetilde A_0$ is invertible, and the transfer function matrix of $\rev_1\mathcal{L}(\la)$ is equivalent at $0$ to $\rev_{\ell} P(\la)$, for some $\ell\geq \deg P(\la)$, then $	\mathcal{L}(\la)$ is a strong linearization of $P(\la)$.
\end{prop}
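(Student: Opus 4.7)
The plan is to mimic the block-Schur factorization \eqref{eq_unimodular} applied to $\rev_1\mathcal{L}(\la)$, but with rational matrices that are merely invertible at $0$ in place of unimodular ones. Combined with the relaxed definition of strong linearization stated in the remark following \eqref{eq_strong_lin} (cf.\ \cite[Section 3]{strong}), this will upgrade the fact that $\mathcal{L}(\la)$ is a linearization to the stronger statement that it is a strong linearization.

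Explicitly, I would introduce the rational matrices
\begin{equation*}
U(\la) := \begin{bmatrix} \widetilde C(\la)\widetilde A(\la)^{-1} & I_p \\ \widetilde A(\la)^{-1} & 0 \end{bmatrix}, \qquad V(\la) := \begin{bmatrix} -\widetilde A(\la)^{-1}\widetilde B(\la) & I_n \\ I_m & 0 \end{bmatrix},
\end{equation*}
which are well-defined at $0$ because $\widetilde A_0=\widetilde A(0)$ is invertible by hypothesis. A short determinant computation (e.g.\ permuting block rows/columns to anti-diagonal form) gives $\det U(0)=\pm\det \widetilde A_0^{-1}\neq 0$ and $\det V(0)=\pm 1\neq 0$, so both $U(\la)$ and $V(\la)$ are invertible at $0$. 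The same algebraic identity that underlies \eqref{eq_unimodular}, which uses only the invertibility of $\widetilde A(\la)$ over $\F(\la)$, then produces
\begin{equation*}
U(\la)\,\rev_1\mathcal{L}(\la)\,V(\la) \;=\; \begin{bmatrix} \widetilde G(\la) & 0 \\ 0 & I_n \end{bmatrix},
\end{equation*}
where $\widetilde G(\la)=\widetilde D(\la)+\widetilde C(\la)\widetilde A(\la)^{-1}\widetilde B(\la)$ is the transfer function matrix of $\rev_1\mathcal{L}(\la)$.

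By hypothesis there exist rational matrices $X(\la),Y(\la)$ invertible at $0$ with $X(\la)\widetilde G(\la)Y(\la)=\rev_\ell P(\la)$. Embedding them as $\diag(X(\la),I_n)$ and $\diag(Y(\la),I_n)$ and composing with the identity above shows that $\rev_1\mathcal{L}(\la)$ is equivalent at $0$ to $\diag(\rev_\ell P(\la),I_n)$. Since $\mathcal{L}(\la)$ has size $(n+p)\times(n+m)$ and is a linearization of $P(\la)\in\F[\la]^{p\times m}$, the integer $s$ appearing in \eqref{eq_linearization}--\eqref{eq_strong_lin} is forced to be $n$, so this is precisely the relaxed form of \eqref{eq_strong_lin}. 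Invoking the remark after \eqref{eq_strong_lin} then gives that $\mathcal{L}(\la)$ is a strong linearization of $P(\la)$.

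I do not anticipate a substantial obstacle: the algebra is a verbatim copy of \eqref{eq_unimodular} together with the observation that the transforming factors need not be unimodular, only invertible at $0$. The only non-automatic ingredient is the cited equivalence-at-$0$ reformulation of strong linearization; without it one could establish the displayed equivalence at $\la=0$ but would be unable to jump to a unimodular equivalence of the form \eqref{eq_strong_lin} directly.
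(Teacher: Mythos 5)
Your proof is correct, but it follows a genuinely different route from the paper's. You reuse the Schur-complement factorization \eqref{eq_unimodular} with rational factors that are merely invertible at $0$ (legitimate, since $\widetilde A(\la)$ is regular by the definition of polynomial system matrix and $\widetilde A_0$ is invertible), conclude that $\rev_1\mathcal{L}(\la)$ is equivalent at $0$ to $\diag(\rev_\ell P(\la), I_n)$, and then invoke the relaxed characterization of strong linearization in which $U_2(\la),V_2(\la)$ in \eqref{eq_strong_lin} need only be rational and invertible at $0$. The paper instead argues through spectral data: it cites the characterization of strong linearizations in \cite[Theorem 4.1]{spectral} (equality of the dimensions of the rational null spaces, of the finite elementary divisors, and of the elementary divisors at infinity), gets the first two items for free from the hypothesis that $\mathcal{L}(\la)$ is a linearization, and gets the elementary divisors at infinity from \cite[Theorem 3.5]{local}, which converts the equivalence at $0$ of the transfer functions, together with the invertibility of $\widetilde A_0$, into equality of the elementary divisors at $0$ of $\rev_1\mathcal{L}(\la)$ and $\rev_\ell P(\la)$. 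So the two arguments lean on different external results: yours on the unimodular-versus-invertible-at-$0$ equivalence of definitions, the paper's on \cite[Theorem 4.1]{spectral} and \cite[Theorem 3.5]{local}. One caution: the paper's remark after \eqref{eq_strong_lin} offers ``the proof of Proposition \ref{prop_strong_lin}'' itself as one of the two justifications for that equivalence of definitions, so your argument must rest on the independent source \cite[Section 3]{strong} (which you do cite) to avoid circularity. What each approach buys: your factorization is explicit and essentially self-contained apart from that one cited equivalence, and it exhibits the transforming matrices; the paper's version constructs nothing but makes transparent exactly which spectral invariants (in particular the elementary divisors at infinity) are being matched.
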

\begin{proof} Since $\mathcal{L}(\la)$ is a linearization of $P(\la)$, we know by \cite[Theorem 4.1]{spectral} that
	\begin{itemize}
		\item[\rm(a)] $\dim \mathcal{N}_r ( P)=\dim \mathcal{N}_r (\mathcal{L})$ and $\dim \mathcal{N}_{\ell} ( P)=\dim \mathcal{N}_{\ell} (\mathcal{L})$, where $\mathcal{N}_r (\cdot)$ and $\mathcal{N}_\ell (\cdot)$ stand for right and left rational null spaces, respectively.
		\item[\rm(b)] $\mathcal{L}(\la)$ and $P(\la)$ have exactly the same finite eigenvalues with the same elementary divisors.
	\end{itemize}
Moreover, taking into account that $\widetilde A_0$ is invertible and that the transfer function matrix of $\rev_1\mathcal{L}(\la)$ is equivalent at $0$ to $\rev_{\ell} P(\la)$, we have by \cite[Theorem 3.5]{local} that the elementary divisors at $0$ of $\rev_1\mathcal{L}(\la)$ and $\rev_{\ell} P(\la)$ are the same or, equivalently, that
	 	\begin{itemize}
	 	\item[\rm(c)] $\mathcal{L}(\la)$ and $P(\la)$ have exactly the same elementary divisors at infinity.
	 	\end{itemize}
Statements $\rm(a)$, $\rm(b)$ and $\rm(c)$ imply by \cite[Theorem 4.1]{spectral} that $\mathcal{L}(\la)$ is a strong linearization of $P(\la)$.
	\end{proof}

\subsection{Recovery of eigenvectors}

An advantage of considering linearizations of matrix polynomials as Rosenbrock's system matrices with unimodular state matrix is that the eigenvectors associated with an eigenvalue $\la_0$ can be recovered always in the same way. Given $\lambda_{0}\in\overline \F$ and a polynomial matrix $P(\la)\in\F[\la]^{p\times m}$, we consider the following vector spaces over $\overline \F$:
\[
\begin{array}{l}
\mathcal{N}_r (P(\la_0))=\{x\in\overline\FF^{m\times 1}: P(\la_0)x=0\}, \text{ and}\\
\mathcal{N}_\ell (P(\la_0))=\{y^T\in\overline\FF^{1\times p}: y^T P(\la_0)=0\},
\end{array}
\]
which are called, respectively, the right and left nullspaces over $\overline \F$ of $P(\la_0)$. If $\la_0$ is an eigenvalue of a regular $P(\la)$, then $\mathcal{N}_r (P(\la_{0}))$ and $\mathcal{N}_{\ell} (P(\la_{0}))$ are non trivial and contain, respectively, the right and left eigenvectors of $P(\la)$ associated with $\la_0.$

In the following Proposition \ref{prop:eigenvectors} we state the relation between the right and left nullspaces of a polynomial system matrix with unimodular state matrix and those of its polynomial transfer function matrix $P(\la)$. This is a particular case of the results from  \cite[Proposition 5.1]{DoMaQu19} and \cite[Proposition 5.2]{DoMaQu19}. Then, in the particular case of $P(\la)$ being regular, Proposition \ref{prop:eigenvectors} can be used to recover the right and left eigenvectors of $P(\la)$ from those of a polynomial system matrix of it with unimodular state matrix.

\begin{prop}\label{prop:eigenvectors} Let $S(\la)$ be a polynomial system matrix as in Proposition \ref{prop_unimod} with $A(\la)$ unimodular, and let $P(\la)\in\FF[\la]^{p\times m}$ be its transfer function matrix. Let $\la_0\in\overline \F$. Then, the following statements hold:
	\begin{itemize}
		\item[\rm(a)] The linear map
		\begin{align*}
		E_r \, : \, \mathcal{N}_r( P(\la_0)) & \longrightarrow \mathcal{N}_r(S(\la_0)) \\
		x & \longmapsto  \begin{bmatrix} -A(\lambda_0)^{-1}
		B(\lambda_0) \\I_m \end{bmatrix} x
		\end{align*}
		is a bijection between the right nullspaces over $\overline \F$ of $ P(\lambda_0)$ and $S(\lambda_0)$.
		
		\item[\rm(b)] The linear map
		\begin{align*}
		E_\ell \, : \, \mathcal{N}_\ell( P(\la_0)) & \longrightarrow \mathcal{N}_\ell(S(\la_0)) \\
		y^T & \longmapsto  y^T\begin{bmatrix}C(\lambda_0)A(\lambda_0)^{-1} & I_p \end{bmatrix}
		\end{align*}
		is a bijection between the left nullspaces over $\overline \F$ of $ P(\lambda_0)$ and $S(\lambda_0)$.
	\end{itemize}
\end{prop}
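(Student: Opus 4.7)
The plan is to derive both bijections directly from the explicit unimodular equivalence \eqref{eq_unimodular}, which under the present hypothesis is actually an equivalence between polynomial matrices. Since $A(\la)$ is unimodular, $A(\la)^{-1}$ is polynomial, so the left and right transforming matrices in \eqref{eq_unimodular}, call them $U(\la)$ and $V(\la)$, are themselves polynomial matrices with nonzero constant determinant. Their evaluations $U(\la_0)$ and $V(\la_0)$ are therefore invertible over $\overline{\F}$ at every $\la_0\in\overline{\F}$, and specialising \eqref{eq_unimodular} at $\la=\la_0$ yields
$$U(\la_0)\,S(\la_0)\,V(\la_0) = \begin{bmatrix} P(\la_0) & 0 \\ 0 & I_n \end{bmatrix}.$$

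For part (a), I would observe that the right nullspace of the block diagonal right-hand side is exactly $\{[x^T,\,0^T]^T : x\in\mathcal{N}_r(P(\la_0))\}$, since $I_n$ is invertible. Left multiplication by the invertible matrix $V(\la_0)$ transports this set bijectively onto $\mathcal{N}_r(S(\la_0))$, and a one-line block computation gives
$$V(\la_0)\begin{bmatrix} x \\ 0 \end{bmatrix} = \begin{bmatrix} -A(\la_0)^{-1}B(\la_0) \\ I_m \end{bmatrix} x,$$
which is precisely $E_r(x)$. Linearity of $E_r$ is immediate from the formula, so this establishes (a).

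Part (b) follows by the symmetric argument on the left. Using that $V(\la_0)$ is invertible, $y^T S(\la_0)=0$ is equivalent to $y^T U(\la_0)^{-1}\,\diag(P(\la_0),I_n)=0$, which is in turn equivalent to $y^T U(\la_0)^{-1}=[u^T,\,0]$ for some $u^T\in\mathcal{N}_\ell(P(\la_0))$. Multiplying on the right by $U(\la_0)$ and performing the block product then recovers $E_\ell(u^T)=u^T\begin{bmatrix} C(\la_0)A(\la_0)^{-1} & I_p \end{bmatrix}$, and the inverse map sends $y^T$ to the block of $y^T U(\la_0)^{-1}$ of length $p$, which exhibits $E_\ell$ as a linear bijection. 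I do not foresee any genuine obstacle here: the whole argument reduces to specialising \eqref{eq_unimodular} at $\la_0$ and then carefully tracking block row and column sizes so that the nullspaces of $\diag(P(\la_0),I_n)$ are correctly identified with those of $P(\la_0)$.
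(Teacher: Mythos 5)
Your proof is correct, and it is genuinely more self-contained than what the paper does: the paper gives no proof at all for this proposition, simply citing it as a particular case of Propositions 5.1 and 5.2 of the reference \cite{DoMaQu19}, which treat general minimal polynomial system matrices and there require $\la_0$ not to be an eigenvalue of the state matrix. Your route instead specializes the explicit equivalence \eqref{eq_unimodular} at $\la_0$, and the key observation that makes it work is exactly the one you state: when $A(\la)$ is unimodular, the transforming matrices $U(\la)$ and $V(\la)$ in \eqref{eq_unimodular} are polynomial with constant nonzero determinants ($\det U = \pm(\det A)^{-1}$, $\det V=\pm 1$), hence invertible at every $\la_0\in\overline{\F}$, so the nullspace transport $w\mapsto V(\la_0)w$ on the right and $[u^T\;\;0]\mapsto [u^T\;\;0]\,U(\la_0)$ on the left is valid with no exceptional points. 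Your block computations $V(\la_0)\left[\begin{smallmatrix}x\\0\end{smallmatrix}\right]=\left[\begin{smallmatrix}-A(\la_0)^{-1}B(\la_0)\\ I_m\end{smallmatrix}\right]x$ and $[u^T\;\;0]\,U(\la_0)=u^T[\,C(\la_0)A(\la_0)^{-1}\;\;I_p\,]$ check out against the block sizes in \eqref{eq_unimodular}, and the identification of $\mathcal{N}_r(\diag(P(\la_0),I_n))$ and $\mathcal{N}_\ell(\diag(P(\la_0),I_n))$ with copies of $\mathcal{N}_r(P(\la_0))$ and $\mathcal{N}_\ell(P(\la_0))$ padded by zeros is exactly right. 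What your approach buys is a short, reference-free proof valid for all $\la_0$; what the paper's citation buys is consistency with the more general rational-matrix setting of \cite{DoMaQu19}, where $A(\la)$ need not be unimodular and the statement must be restricted to points where $A(\la_0)$ is invertible.
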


We can see that, in particular, right and left eigenvectors of a polynomial matrix $P(\la)$ can be directly recovered from the last block of the right and left eigenvectors of its polynomial system matrix $S(\la)$. In some cases, we can also recover the right or the left eigenvectors of $P(\la)$ from other blocks of the right or the left eigenvectors of $S(\la)$. This is illustrated for right eigenvectors in the following example.

\begin{example} Recall the Frobenius companion form $C_1(\la)$ in Section \ref{sec_frobenius} and the partition as a polynomial system matrix in \eqref{eq_frobenius}. By \eqref{eq_invab}, we have that, for any $\la_0\in\overline\F$, $$-A(\la_0)^{-1} B(\la_0) =
	\begin{bmatrix}
	\la_0^{k-1} \, I_m &
	\la_0^{k-2} \, I_m &
	\cdots &
	\la_0 I_m
	\end{bmatrix}^{T}.$$ Therefore, by Proposition \ref{prop:eigenvectors}, the linear map
	\begin{align*}
	F_r \, : \, \mathcal{N}_r( P(\la_0)) & \longrightarrow \mathcal{N}_r(C_1(\la_0)) \\
	x & \longmapsto  \begin{bmatrix}
	\la_0^{k-1} \, I_m &
	\la_0^{k-2} \, I_m &
	\cdots &
	\la_0 I_m & I_m
	\end{bmatrix}^{T} x
	\end{align*}
	is a bijection between the right nullspaces over $\overline \F$ of $ P(\lambda_0)$ and $C_1(\la_0)$.
\end{example}

\section{Comrade linearizations} \label{sec_comrade}

Consider a polynomial matrix
	$$
	P(\lambda)=P_{k}\phi_{k}(\lambda)+P_{k-1}\phi_{k-1}(\lambda)+\cdots + P_{1}\phi_{1}(\lambda)+P_{0}\phi_{0}(\lambda) \in \FF[\la ]^{p\times m},
	$$
	written in terms of a polynomial basis satisfying a three-term recurrence relation of the form: 	
	\[
	\alpha_{j}\phi_{j+1}(\lambda)=(\lambda-\beta_{j})\phi_{j}(\lambda)-\gamma_{j}\phi_{j-1}(\lambda) \quad j\geq 0
	\]
	where $\alpha_{j},\beta_{j},\gamma_{j}\in\FF,$ $ \alpha_{j}\neq 0,$ $\phi_{-1}(\lambda)=0,$ and $\phi_{0}(\lambda)=1.$ It is ``well-known'' that the  following ``comrade'' companion matrix introduced in \cite[Chapter 5]{barnett}
	\[ {\small C_\phi (\la) =
		\left[ \begin{array}{cccccc}
		\dfrac{(\lambda-\beta_{k-1})}{\alpha_{k-1}}P_{k}+P_{k-1} & P_{k-2}-\dfrac{\gamma_{k-1}}{\alpha_{k-1}}P_{k} & P_{k-3} & \cdots & P_{1} & P_{0} \\
		-\alpha_{k-2} I & (\lambda -\beta_{k-2}) I & -\gamma_{k-2} I & & & \\
		& -\alpha_{k-3} I & (\lambda - \beta_{k-3}) I & -\gamma_{k-3} I & & \\
		& &\ddots &\ddots & \ddots &  \\
		& & &-\alpha_{1} I& (\lambda-\beta_{1}) I & -\gamma_{1} I \\
		& & & &-\alpha_{0} I& (\lambda-\beta_{0}) I
		\end{array}  \right]}
	\]
is a strong linearization of $P(\la)$. Different proofs of this fact can be found in \cite{aclan-2009,DoMaQu19,ortho}. As we discuss in this section, this can also be proved via Rosenbrock's system matrices.

\subsection{Comrade linearizations as Rosenbrock's system matrices}
With the following partition:
	
	{\small \begin{align*}
		C_\phi (\la) & =
		\left[ \begin{array}{ccccc|c}
		\dfrac{(\lambda-\beta_{k-1})}{\alpha_{k-1}}P_{k}+P_{k-1} & P_{k-2}-\dfrac{\gamma_{k-1}}{\alpha_{k-1}}P_{k} & P_{k-3} & \cdots & P_{1} & P_{0} \\ \hline
		-\alpha_{k-2} I &  (\lambda -\beta_{k-2}) I &   -\gamma_{k-2} I & &  & \\
		&  -\alpha_{k-3} I &  (\lambda - \beta_{k-3}) I &  -\gamma_{k-3} I &  & \\
		&  & \ddots & \ddots & \ddots &  \\
		& & &-\alpha_{1} I&  (\lambda-\beta_{1}) I & -\gamma_{1} I \\
		& &  &  & -\alpha_{0} I& (\lambda-\beta_{0}) I
		\end{array} \right] \\ &  =:\begin{bmatrix}
		-C(\la) & D(\la)\\
		A(\la) & B(\la)
		\end{bmatrix},
		\end{align*}}
	we get that $C_\phi (\la)$ is a linear polynomial system matrix with unimodular state matrix $A(\la)$ and transfer function matrix $P(\la)$. Then $C_\phi (\la)$ is a linearization of $P(\la)$ by Proposition \ref{prop_lin}. Notice that comrade linearizations are constructed by considering the recurrence relation satisfied by the polynomial basis. They are particular cases of the more general notion of CORK linearizations, described in Section \ref{sec_CORK}. The computation of the transfer function matrix of $C_\phi (\la)$ is a particular case of the computation in the proof of Theorem \ref{theo_Corklin}. Therefore, we omit the details here and just emphasize that such computation is based on the identity
\[
\begin{bmatrix}
  A(\la) & B(\la)
\end{bmatrix} \, \left(
\begin{bmatrix}
  \phi_{k-1} (\la) \\
  \phi_{k-2} (\la) \\
  \vdots \\
   \phi_{0} (\la) \\
\end{bmatrix} \otimes I_m \right)
= 0 \, .
\]

	\subsection{Reversal of comrade linearizations as Rosenbrock's system matrices}
	
	To see that $C_\phi (\la)$ is a strong linearization, it is not possible to identify a unimodular submatrix of $ \rev_1 C_\phi (\la)$ such that the transfer function matrix is $ \rev_k P(\la)$. However, we can use Proposition \ref{prop_strong_lin}. For that, we consider the following partition:
		{\small \begin{align*}
	\rev_1	C_\phi (\la) & =
		\left[ \begin{array}{c|ccccc}
		\dfrac{(1-\lambda\beta_{k-1})}{\alpha_{k-1}}P_{k}+\lambda P_{k-1} & \lambda P_{k-2}-\lambda\dfrac{\gamma_{k-1}}{\alpha_{k-1}}P_{k} & \lambda P_{k-3} & \cdots & \lambda P_{1} & \lambda P_{0} \\ \hline
		-\lambda \alpha_{k-2} I &  (1 -\lambda \beta_{k-2}) I &   -\lambda \gamma_{k-2} I & &  & \\
		&  -\lambda \alpha_{k-3} I &  (1 - \lambda \beta_{k-3}) I &  -\lambda \gamma_{k-3} I &  & \\
		&  & \ddots & \ddots & \ddots &  \\
		& & &-\lambda \alpha_{1} I&  (1-\lambda\beta_{1}) I & -\lambda\gamma_{1} I \\
		& &  &  & -\lambda\alpha_{0} I& (1-\lambda\beta_{0}) I
		\end{array} \right] \\ &  =:\begin{bmatrix}
		\widetilde D(\la) & -\widetilde C(\la)\\
		\widetilde B(\la) & \widetilde A(\la)
		\end{bmatrix},
		\end{align*}}
	so that $\rev_1	C_\phi (\la)$ is a linear polynomial system matrix of $\dfrac{1}{f(\la)}\rev_{k} P(\la)$, with $f(\la):=\la^{k-1}\phi_{k-1}(1/\la)$, and state matrix $\widetilde A(\la)$. In addition, $\widetilde A(\la)$ is invertible at $0$ and $f(0)\neq 0$ since $\deg \phi_{k-1}(\la)=k-1$. The computation of the transfer function matrix of $\rev_1C_\phi (\la)$ is a particular case of the computation given in the proof of Theorem \ref{theo_cork_inf}. Therefore, we omit the details here.


	\section{CORK linearizations}\label{sec_CORK}
	
	In this section we consider polynomial matrices $P(\la)$ written as
	\begin{equation}\label{polmatrix}
	P(\la)=\displaystyle\sum_{i=0}^{k-1}(A_{i}-\la B_{i})p_{i}(\la) \in \FF[\la]^{p \times m},
	\end{equation}
	where $p_{i}(\la)$ are scalar polynomials with $p_{0}(\la) \equiv 1$ and $A_i$, $B_i\in \FF^{p \times m}$. Define the polynomial vector $$p(\la):=[p_{k-1}(\la)\quad \cdots \quad p_{0}(\la)]^{T},$$ and assume that the polynomials $p_i(\la)$ satisfy a linear relation
	\begin{equation}\label{linearrelation}
	(X-\la Y)p(\la)=0,
	\end{equation}
	where $\rank(X-\la_0 Y)=k-1\text{  for all }\la_0\in\overline \F,$ and $X-\la Y$ has size $(k-1)\times k.$ Then the matrix pencil
	\begin{equation}\label{eq_corklin}C(\la)=\left[\begin{array}{c}\begin{array}{ccc}
	A_{k-1}-\la B_{k-1} & \cdots & A_{0}-\la B_{0}
	\end{array} \\ \hdashline[2pt/2pt] \phantom{\Big|}
	(X-\la Y)\otimes I_{m}
	\end{array}\right]  \end{equation}
	is called a CORK linearization of $P(\la)$ \cite{CORK}.  CORK linearizations have played a fundamental role in the development of the Compact Rational Krylov (CORK) algorithm \cite{CORK} for solving numerically large scale PEPs that arise as approximations of other nonlinear eigenvalue problems coming from real-world applications. Such PEPs may have large degrees and the advantage of the CORK algorithm over previous methods are that its computational and memory costs are essentially independent of the degree of the PEP.

We show in the following subsection that the CORK linearization $C(\la)$ can be seen as a linear polynomial system matrix of $P(\la)$ with unimodular state matrix.
	
	\subsection{CORK linearizations as Rosenbrock's system matrices}
	
	\begin{theo}\label{theo_Corklin} Let $P(\la)$ be a matrix polynomial as in \eqref{polmatrix} and consider the matrix pencil $C(\la)$ in \eqref{eq_corklin}. Consider the following partition
		$$ C(\la)=\left[\begin{array}{c|c}
		\begin{array}{ccc}
		A_{k-1}-\la B_{k-1} & \cdots & A_{1}-\la B_{1}
		\end{array} & A_{0}-\la B_{0} \\ \hline \phantom{\Big|}
		X_1(\la) & \begin{array}{c}
		X_2(\la) \end{array}
		\end{array}\right], $$	
		where $(X-\la Y)\otimes I_m =: \begin{bmatrix}
		X_{1}(\la) & X_2(\la)
		\end{bmatrix} $ and $X_1(\la)$ has size $(k-1)m\times (k-1)m$. Then, $C(\lambda)$ is a linear polynomial system matrix with state matrix $X_1(\la)$ and transfer function matrix $P(\la)$. In addition, $X_1(\la)$ is unimodular.
	\end{theo}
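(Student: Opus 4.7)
The plan is to establish the two claims in order: first the structural fact that $X_1(\la)$ is unimodular, and then the computation showing that the Schur complement equals $P(\la)$. The whole argument hinges on the relation $(X-\la Y)p(\la)=0$ together with the normalization $p_0(\la)\equiv 1$.

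\textbf{Unimodularity of $X_1(\la)$.} Since $X_1(\la)$ is the Kronecker product with $I_m$ of the leading $(k-1)\times(k-1)$ submatrix of $X-\la Y$, it suffices to show that this scalar submatrix has nonzero constant determinant. Fix any $\la_0\in\overline\F$. By hypothesis $\rank(X-\la_0 Y)=k-1$, so its right null space over $\overline\F$ is one-dimensional. The vector $p(\la_0)$ lies in this null space, and its last entry is $p_0(\la_0)=1\neq 0$, so $p(\la_0)$ spans the null space and every nonzero null vector has nonzero last entry. Equivalently, no nonzero vector of the form $(v,0)^T$ lies in the null space, which means the first $k-1$ columns of $X-\la_0 Y$ are linearly independent. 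Thus $\det(X-\la_0 Y)_{1:k-1}\neq 0$ for every $\la_0\in\overline\F$, so this determinant is a nonzero constant, and hence $X_1(\la)$ is unimodular.

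\textbf{Computation of the transfer function.} With the partition as in the statement, the system matrix has $-C(\la)=[\,A_{k-1}-\la B_{k-1},\ldots,A_1-\la B_1\,]$, $D(\la)=A_0-\la B_0$, $A(\la)=X_1(\la)$, and $B(\la)=X_2(\la)$, so the transfer function is
\[
D(\la)+C(\la)X_1(\la)^{-1}X_2(\la)=(A_0-\la B_0)-[\,A_{k-1}-\la B_{k-1},\ldots,A_1-\la B_1\,]\,X_1(\la)^{-1}X_2(\la).
\]
To evaluate $X_1(\la)^{-1}X_2(\la)$ without computing $X_1(\la)^{-1}$ explicitly, I apply the Kronecker identity
\[
\bigl[(X-\la Y)\otimes I_m\bigr]\,\bigl(p(\la)\otimes I_m\bigr)=\bigl((X-\la Y)p(\la)\bigr)\otimes I_m=0.
\]
Splitting the vector $p(\la)\otimes I_m$ into its top $(k-1)m$ rows (the blocks corresponding to $p_{k-1}(\la),\ldots,p_1(\la)$) and its bottom $m$ rows (the block corresponding to $p_0(\la)\equiv 1$, which is exactly $I_m$), this reads
\[
X_1(\la)\,\bigl([p_{k-1}(\la),\ldots,p_1(\la)]^T\otimes I_m\bigr)+X_2(\la)=0,
\]
so $X_1(\la)^{-1}X_2(\la)=-[p_{k-1}(\la),\ldots,p_1(\la)]^T\otimes I_m$. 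Substituting back yields
\[
(A_0-\la B_0)+\sum_{i=1}^{k-1}(A_i-\la B_i)p_i(\la)=\sum_{i=0}^{k-1}(A_i-\la B_i)p_i(\la)=P(\la),
\]
as required.

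The only genuinely delicate point is the unimodularity argument, where one has to translate ``constant rank $k-1$'' plus $p_0\equiv 1$ into the statement that a specific $(k-1)\times(k-1)$ minor never vanishes; the rest is a direct Kronecker computation that parallels the one already done for the Frobenius form, using the linear relation satisfied by the basis $\{p_i(\la)\}$ in place of the monomial relation.
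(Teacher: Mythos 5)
Your proof is correct, and the transfer function computation is exactly the paper's: both exploit $\bigl[(X-\la Y)\otimes I_m\bigr](p(\la)\otimes I_m)=0$ together with $p_0(\la)\equiv 1$ to get $X_1(\la)^{-1}X_2(\la)=-[p_{k-1}(\la)I_m,\ldots,p_1(\la)I_m]^T$ and hence $P(\la)$ as the Schur complement. Where you genuinely diverge is in proving unimodularity of $X_1(\la)$. The paper proceeds in two stages: it first shows $X_1(\la)$ is regular by a left-nullvector contradiction against the full row normal rank of $(X-\la Y)\otimes I_m$, and only after computing the transfer function does it conclude unimodularity, by embedding $[X_1(\la)\;X_2(\la)]$ into the auxiliary pencil $\left[\begin{smallmatrix} X_1(\la) & X_2(\la)\\ I_{(k-1)m} & 0\end{smallmatrix}\right]$, checking minimality from $\rank(X-\la_0 Y)=k-1$, and invoking Proposition \ref{prop_unimod}, which ultimately rests on Rosenbrock's Theorem \ref{th:Rosen}. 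You instead argue pointwise and directly: for each $\la_0\in\overline\F$ the null space of $X-\la_0 Y$ is one-dimensional and spanned by $p(\la_0)$, whose last entry is $1$, so no nonzero null vector has vanishing last entry, the first $k-1$ columns are independent, and the corresponding $(k-1)\times(k-1)$ determinant is a polynomial with no roots in $\overline\F$, hence a nonzero constant; unimodularity of $X_1(\la)=(X-\la Y)_{:,1:k-1}\otimes I_m$ follows since $\det(M\otimes I_m)=(\det M)^m$. Your route is more elementary and self-contained (no appeal to minimality or to the pole--zero machinery), and it also gives regularity of the state matrix up front, which is logically tidier; the paper's route, by contrast, deliberately showcases its own tool (Proposition \ref{prop_unimod}) in the situation where the transfer function is known to be polynomial. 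Both arguments are valid and of comparable length.
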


\begin{proof}By \eqref{linearrelation}, we have that $\begin{bmatrix}X_{1}(\la) & X_2(\la)\end{bmatrix}(p(\la)\otimes I_m)=0$ and, thus,
	\begin{equation}\label{eq_x1x2}
 X_1(\la) [p_{k-1}(\la) I_m\quad \cdots \quad p_{1}(\la) I_m]^{T}+ X_2(\la)=0,\end{equation} taking into account that $p_0(\la)=1$.  From \eqref{eq_x1x2} it follows that $X_{1}(\la)$ is regular. By contradiction, if $X_{1}(\la)$ is singular there exists a nonzero polynomial vector $w(\la)$ such that $ w(\la)^T X_{1}(\la)=0$ and, therefore, $ w(\la)^T X_{2}(\la)=0$ by \eqref{eq_x1x2}. Thus, $ w(\la)^T \begin{bmatrix} X_{1}(\la) & X_{2}(\la)\end{bmatrix}=0$. But this is a contradiction since $\begin{bmatrix} X_{1}(\la) & X_{2}(\la)\end{bmatrix}$ has full row normal rank. Then $C(\lambda)$ is a linear polynomial system matrix with state matrix $X_1(\la)$ and its transfer function matrix is
\begin{equation}\label{eq_transfer_c}
A_{0}-\la B_{0} -\begin{bmatrix}
A_{k-1}-\la B_{k-1} & \cdots & A_{1}-\la B_{1}
\end{bmatrix}X_1(\la)^{-1}X_2(\la).
\end{equation}
 By \eqref{eq_x1x2}, we have that
\begin{equation}\label{eq_1}
X_1(\la)^{-1}X_2(\la)=- [p_{k-1}(\la) I_m\quad \cdots \quad p_{1}(\la) I_m]^{T},
\end{equation} and, by \eqref{eq_transfer_c} and \eqref{eq_1}, we obtain that the transfer function matrix is $$A_{0}-\la B_{0} +\begin{bmatrix}
A_{k-1}-\la B_{k-1} & \cdots &  A_{1} - \la B_{1}
\end{bmatrix}\begin{bmatrix}p_{k-1}(\la) I_m \\ \vdots \\ p_{1}(\la) I_m\end{bmatrix}=P(\la).$$

In addition, the state matrix $X_1(\la)$ is unimodular. To see this, we consider the following pencil $$X(\la):=\begin{bmatrix}
X_1(\la) & X_{2}(\la) \\
I_{(k-1)m} & 0
\end{bmatrix}$$ as a polynomial system matrix with state matrix $X_1(\la)$. Then we have that $X(\la)$ is minimal, since $\rank(X-\la_0 Y)=k-1\text{  for all }\la_0\in\overline \F,$ and the transfer function matrix (i.e., $-X_1(\la)^{-1}X_2(\la)$) is a polynomial matrix by \eqref{eq_1}. Then, by Proposition \ref{prop_unimod}, $X_1(\la)$ is unimodular.
\end{proof}

Theorem \ref{theo_Corklin} together with Proposition \ref{prop_lin} implies that $C(\la)$ is a linearization of $P(\la)$.

	\subsection{Reversal of CORK linearizations as Rosenbrock's system matrices}

 By assuming extra conditions in \eqref{linearrelation}, it follows from Proposition \ref{prop_strong_lin} and the next Theorem \ref{theo_cork_inf} that $C(\la)$ is, in addition, a strong linearization by considering $\rev_1 C(\la)$ as a Rosenbrock's system matrix.

\begin{theo}\label{theo_cork_inf} Let $P(\la)$ be a matrix polynomial as in \eqref{polmatrix} and consider the matrix pencil $C(\la)$ in \eqref{eq_corklin}. Assume that the $(k-1) \times k$ matrix $Y$ in \eqref{eq_corklin} satisfies $\rank Y = k-1$ and that $\deg p_{k-1}(\la)=k-1$. Consider the following partition for $\rev_1 C(\la)$:
	$$ \rev_1 C(\la)=\left[\begin{array}{c|c}
	\la A_{k-1}- B_{k-1} & \begin{array}{ccc}
		\la A_{k-2}- B_{k-2}  & \cdots &  \la A_{0}- B_{0}
	\end{array}\\ \hline \phantom{\Big|}
	\rev_1 Y_1(\la) &
	\rev_1 Y_2(\la)
	\end{array}\right]. $$	
 Then $\rev_1 C(\la)$ is a linear polynomial system matrix with state matrix $\rev_1 Y_2(\la)$  of size $(k-1)m\times (k-1)m$ and transfer function matrix $\dfrac{1}{q(\la)}\rev_k P(\la)$, where $q(\la):=\rev_{k-1}p_{k-1}(\la)$ and $q(0)\neq 0$. In addition, $\rev_1 Y_2(\la)$ is invertible at $0$.
\end{theo}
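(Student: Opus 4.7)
My plan is to mirror the strategy of Theorem \ref{theo_Corklin} applied to the reversed pencil. The claim $q(0)\neq 0$ is immediate: writing $p_{k-1}(\la)=c\la^{k-1}+\cdots$ with $c\neq 0$, we get $q(\la)=\la^{k-1}p_{k-1}(1/\la)=c+\cdots$, so $q(0)=c\neq 0$.

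Next, I would derive the reversed linear relation. Substituting $\la\mapsto 1/\la$ in $(X-\la Y)p(\la)=0$ and multiplying by $\la^{k}$ gives $(\la X-Y)\tilde p(\la)=0$, where $\tilde p(\la):=\la^{k-1}p(1/\la)$ has first entry $q(\la)$ and remaining entries $\tilde p_i(\la):=\la^{k-1}p_i(1/\la)$, $i=k-2,\ldots,0$. Tensoring with $I_m$ and applying the partition in the theorem,
\[
\rev_1 Y_1(\la)\,q(\la)+\rev_1 Y_2(\la)\,\bigl([\tilde p_{k-2}(\la),\ldots,\tilde p_0(\la)]^T\otimes I_m\bigr)=0.
\]
The assumptions $\rank Y=k-1$ and $\rank(X-\la_0 Y)=k-1$ for all $\la_0\in\overline\F$ (from the CORK setup) imply $\rank(\la_0 X-Y)=k-1$ for every $\la_0\in\overline\F$ (the case $\la_0=0$ uses $\rank Y=k-1$, and $\la_0\neq 0$ uses $\la_0 X-Y=\la_0(X-\la_0^{-1}Y)$). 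Hence $[\rev_1 Y_1(\la)\ \rev_1 Y_2(\la)]=(\la X-Y)\otimes I_m$ has full row normal rank. The same contradiction argument used in the proof of Theorem \ref{theo_Corklin} --- if $w(\la)^T\rev_1 Y_2(\la)=0$, the displayed relation gives $w(\la)^T\rev_1 Y_1(\la)q(\la)=0$ and therefore $w=0$ --- shows $\rev_1 Y_2(\la)$ is nonsingular and yields
\[
\rev_1 Y_2(\la)^{-1}\rev_1 Y_1(\la)=-\frac{1}{q(\la)}\,[\tilde p_{k-2}(\la),\ldots,\tilde p_0(\la)]^T\otimes I_m.
\]

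With this in hand, the Schur complement of $\rev_1 Y_2(\la)$ in $\rev_1 C(\la)$ gives
\begin{align*}
(\la A_{k-1}-B_{k-1})&+\frac{1}{q(\la)}\sum_{i=0}^{k-2}(\la A_i-B_i)\tilde p_i(\la)\\
&=\frac{1}{q(\la)}\sum_{i=0}^{k-1}(\la A_i-B_i)\tilde p_i(\la)=\frac{1}{q(\la)}\rev_k P(\la),
\end{align*}
where the first equality uses $\tilde p_{k-1}=q$ and the last follows from the direct computation $\rev_k P(\la)=\la^k P(1/\la)=\sum_{i=0}^{k-1}(\la A_i-B_i)\tilde p_i(\la)$.

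Finally, to prove $\rev_1 Y_2(\la)$ is invertible at $0$, note that $\rev_1 Y_2(0)=-Y'\otimes I_m$, where $Y'$ is the submatrix of $Y$ formed by its last $k-1$ columns. Comparing the coefficient of the highest power of $\la$ in $(X-\la Y)p(\la)=0$ forces the leading-coefficient vector $p_{[d]}$ of $p(\la)$ to lie in $\ker Y$. Since $\deg p_{k-1}=k-1$ guarantees the first entry of $p_{[k-1]}$ is nonzero, and $\ker Y$ is one-dimensional (because $\rank Y=k-1$), every nonzero element of $\ker Y$ has nonzero first entry; equivalently, $Y'$ is invertible. I expect this last step to be the main obstacle, since it rests on the implicit requirement that $\deg p_i\le k-1$ for all $i$ (so that $d=k-1$), a property which follows in the CORK framework from $p(\la)$ being the unique polynomial minimal right null vector of the pencil $X-\la Y$ but which deserves an explicit justification.
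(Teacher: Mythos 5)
Your argument is correct, and its core coincides with the paper's proof: you reverse the relation $(X-\la Y)p(\la)=0$ to obtain $q(\la)\,\rev_1 Y_1(\la)+\rev_1 Y_2(\la)\bigl([\la^{k-1}p_{k-2}(1/\la)\,\cdots\,\la^{k-1}p_{0}(1/\la)]^T\otimes I_m\bigr)=0$, solve for $(\rev_1 Y_2(\la))^{-1}\rev_1 Y_1(\la)$, and multiply the Schur complement by $q(\la)$ to get $\rev_k P(\la)$, exactly as in \eqref{eq_y1y2}--\eqref{eq_transfer_rev2}. The genuine divergence is the invertibility of $\rev_1 Y_2(\la)$ at $0$: the paper argues by contradiction with a \emph{constant} vector $w$, using that $\bigl[\rev_1 Y_1(0)\;\;\rev_1 Y_2(0)\bigr]=-Y\otimes I_m$ has full row rank and $q(0)\neq 0$, so $w^T\rev_1 Y_2(0)=0$ forces $w^T\rev_1 Y_1(0)=0$ and hence $w=0$; you instead identify $\rev_1 Y_2(0)=-(Y'\otimes I_m)$ with $Y'$ the last $k-1$ columns of $Y$ and show $Y'$ is nonsingular because $\ker Y$ is spanned by the leading coefficient vector of $p(\la)$, whose first entry is nonzero. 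Both are valid; the paper's version is marginally lighter since it never needs to describe $\ker Y$, while yours gives the explicit structural reason. Your separate normal-rank argument for the regularity of $\rev_1 Y_2(\la)$ is fine but redundant once invertibility at $0$ is established. Finally, the point you flag as the ``main obstacle''---that $\deg p_i\le k-1$ for all $i$ is used implicitly---is not specific to your route: the paper's proof uses it too, since evaluating \eqref{eq_y1y2} at $\la=0$ requires the entries $\la^{k-1}p_i(1/\la)$ to have no pole there. It does follow from the standing hypotheses of the CORK setting: the rank condition $\rank(X-\la_0 Y)=k-1$ for all $\la_0\in\overline\F$ together with $p_0\equiv 1$ forces $p(\la)$ to be a constant multiple of the minimal polynomial right null vector of the $(k-1)\times k$ pencil $X-\la Y$, whose degree is at most $k-1$ by the Kronecker structure of that pencil; so this is a shared implicit assumption of the framework rather than a gap in your argument.
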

\begin{proof} First, taking into account that $\begin{bmatrix}Y_{1}(\la) & Y_2(\la)\end{bmatrix}(p(\la)\otimes I_m)=0$, we have that $\begin{bmatrix}\rev_1 Y_{1}(\la) & \rev_1 Y_2(\la)\end{bmatrix}(\la^{k-1}p(1/\la)\otimes I_m)=0$ and, thus,	\begin{equation}\label{eq_y1y2}
q(\la)	\rev_1 Y_1(\la) +\rev_1 Y_2(\la) [\la^{k-1}p_{k-2}(1/\la) I_m\quad \cdots \quad \la^{k-1}p_{0}(1/\la) I_m]^{T}=0,\end{equation} where $q(\la):=\rev_{k-1}p_{k-1}(\la)=\la^{k-1}p_{k-1}(1/\la)$. From \eqref{eq_y1y2}, and the fact that the matrix $\begin{bmatrix} \rev_1 Y_{1}(0) & \rev_1 Y_{2}(0)\end{bmatrix}$ has full row rank since $Y$ has full row rank, it follows that $\rev_1 Y_2(\la)$ is invertible at $0$, i.e., that $\rev_1 Y_2(0)$ is invertible. By contradiction, if $\rev_1 Y_2(0)$ is not invertible, there exists a constant vector $w$ such that $w^{T}\rev_1 Y_2(0)=0$ and, by \eqref{eq_y1y2}, $w^{T}\rev_1 Y_1(0)=0$ since $q(0)\neq 0$. Therefore, $w^{T}\begin{bmatrix} \rev_1 Y_{1}(0) & \rev_1 Y_{2}(0)\end{bmatrix}=0$ and this is a contradiction since $\begin{bmatrix} \rev_1 Y_{1}(0) & \rev_1 Y_{2}(0)\end{bmatrix}$ has full row rank.

We now compute the transfer function matrix of $\rev_1 C(\la)$ as a linear polynomial system matrix with state matrix $\rev_1 Y_2(\la)$. That is,
\begin{equation}\label{eq_transfer_rev}
T(\la):=\la A_{k-1}- B_{k-1} -\begin{bmatrix}
\la A_{k-2}- B_{k-2} & \cdots & \la A_{0}- B_{0}
\end{bmatrix}(\rev_1 Y_2(\la))^{-1}	\rev_1 Y_1(\la) .
\end{equation}
By \eqref{eq_y1y2}, we know that
\begin{equation}\label{eq_y1y2_inv}
(\rev_1 Y_2(\la))^{-1}	\rev_1 Y_1(\la) =-\frac{1}{p_{k-1}(1/\la) } [p_{k-2}(1/\la) I_m\quad \cdots \quad p_{0}(1/\la) I_m]^{T}.\end{equation}
Combining \eqref{eq_transfer_rev} and \eqref{eq_y1y2_inv}, we obtain
\begin{equation}\label{eq_transfer_rev2}
T(\la):=\la A_{k-1}- B_{k-1} +\frac{1}{p_{k-1}(1/\la) }\begin{bmatrix}
\la A_{k-2}- B_{k-2} & \cdots & \la A_{0}- B_{0}
\end{bmatrix} \begin{bmatrix}p_{k-2}(1/\la) I_m\\ \vdots \\ p_{0}(1/\la) I_m\end{bmatrix}.
\end{equation}
Multiplying $T(\la)$ by $q(\la)$ we obtain
$$q(\la)T(\la)=\displaystyle\sum_{i=0}^{k-1}(\la A_{i}-B_{i})(\la^{k-1}p_{i}(1/\la))=\rev_k P(\la).$$
\end{proof}

\section{Block Kronecker linearizations}\label{sec_BK}

In this section, we consider the block Kronecker pencils introduced in \cite{BKL} and show that they can also be seen as Rosenbrock's system matrices with unimodular state matrix. Block Kronecker pencils are a wide family of pencils that contains, among many other pencils, the Frobenius companion form and, modulo permutations, the Fiedler pencils originally introduced in \cite{fiedler-original} for scalar polynomials and extended in \cite{greeks,dtdm-fiedler} to matrix polynomials. They also include, modulo permutations, the generalized Fiedler pencils introduced in \cite{greeks}. These inclusions were studied in \cite{fiedler}. Block Kronecker pencils are particular cases of a much wider class of pencils that are called strong block minimal bases pencils \cite{BKL}. Block Kronecker pencils have excellent properties. For instance, they can be used for solving numerically PEPs in a structurally backward stable manner \cite{BKL}.

\begin{deff}\label{def_bk}
	Let $\la M_1 + M_0$ be an arbitrary pencil. Any pencil of the form
	\[
	C_{K} (\la) =
	\left[
	\begin{array}{c|c}
	\lambda M_1+M_0  & L_{\eta}(\lambda)^T\otimes I_{p}\\\hline
	   L_{\epsilon}(\lambda)\otimes I_{m}&0
	\end{array}
	\right]
	\>,
	\]
	is called a block Kronecker pencil, where
	\[
	L_k(\lambda)  :=\begin{bmatrix}
	-1& \lambda  \\
	& -1 & \lambda  \\
	& & \ddots & \ddots \\
	& & & -1 & \lambda   \\
	\end{bmatrix}\in\FF[\lambda]^{k\times (k+1)}.
	\]
	The one-block row and one-block column cases are included, i.e., the second block row or the second block column can be empty.
\end{deff}

It was proved in \cite[Theorem 4.2]{BKL} that any block Kronecker pencil is a strong  linearization of the polynomial matrix
\begin{equation} \label{eq.plambdaforBKL}
P(\la):=(\Lambda_\eta(\la)^T\otimes I_p)(\la M_1 + M_0)(\Lambda_\epsilon(\la)\otimes I_m) \, \in \FF[\lambda]^{p\times m},
\end{equation}
where $\Lambda_k (\lambda)$ was defined in \eqref{eq.defcapitalLambda}. The proof in \cite{BKL} is a corollary of the theory of strong block minimal bases pencils. We will see in this section that another proof easily follows from the approach via Rosenbrock's system matrices.

\subsection{Block Kronecker linearizations as Rosenbrock's system matrices}

Observe that we can write

\begin{equation}\label{eq_epsilon}
 L_{\epsilon}(\lambda)\otimes I_{m}=  \left[\begin{array}{cccc|c}
-I_m& \lambda I_m & & &  \\
& -I_m & \lambda I_m & &  \\
& & \ddots & \ddots \\
& & & -I_m & \lambda  I_m
\end{array} \right] =:  \left[\begin{array}{c|c} A_{\epsilon,m}(\la) &  B_{\epsilon,m} (\la)
\end{array} \right] ,
\end{equation}
and $A_{\epsilon,m}(\la) $ is unimodular. Analogously,

\begin{equation}\label{eq_eta} L_{\eta}(\lambda)\otimes I_{p}= \left[\begin{array}{cccc|c}
-I_p& \lambda I_p & & & \\
& -I_p & \lambda I_p & &  \\
& & \ddots & \ddots \\
& & & -I_p & \lambda  I_p
\end{array} \right] =:  \left[\begin{array}{c|c}  A_{\eta,p}(\la) &   B_{\eta,p} (\la)
\end{array} \right] ,
\end{equation}
and $A_{\eta,p}(\la) $ is unimodular. Then, $C_{K}(\la)$ can be partitioned as:

\[
C_{K} (\la) =
\left[
\begin{array}{c;{2pt/2pt}c|c}
M_{11}(\la) & M_{12}(\la) &  A_{\eta,p}(\la)^{T}  \\ \hdashline[2pt/2pt]
M_{21}(\la) & M_{22}(\la) &  B_{\eta,p}(\la)^{T}  \\ \hline
 A_{\epsilon,m}(\la) &   B_{\epsilon,m}(\la) & 0
\end{array}
\right],
\]
and we set

\begin{align}\label{eq_partition1}
A(\la) &  := \begin{bmatrix}
M_{11}(\la)  &  A_{\eta,p}(\la)^{T} \\
A_{\epsilon,m}(\la)  & 0
\end{bmatrix}, \quad
B(\la):=\begin{bmatrix}
M_{12}(\la) \\  B_{\epsilon,m} (\lambda)
\end{bmatrix} , \\ \label{eq_partition11}
C(\la)& :=-\begin{bmatrix}
M_{21}(\la) &  B_{\eta,p}(\la)^{T}
\end{bmatrix}, \quad\text{and} \quad D(\la):= M_{22}(\la) .
\end{align}
Notice that $A(\la)$ is unimodular for any $M_{11}(\la)$. With the partition above, we  prove in Theorem \ref{theo_BK_Rosenb} that $C_{K} (\la)$ is a linear polynomial system matrix with unimodular state matrix $A(\la)$ and whose transfer function matrix is the matrix polynomial $P(\la)$ in \eqref{eq.plambdaforBKL}.

\begin{theo} \label{theo_BK_Rosenb}
	Let $C_{K}(\la)$ be a block Kronecker pencil as in Definition \ref{def_bk}. Then, the following statements hold:
	\begin{itemize}
		\item[\rm(a)] The submatrix $A(\la)$ of $C_{K}(\la)$ as in \eqref{eq_partition1} is unimodular.
		
		\item[\rm(b)] The Schur complement of $A(\la)$ in $C_{K}(\la)$ is the polynomial matrix $P(\la)$ in \eqref{eq.plambdaforBKL}.
	
		\item[\rm(c)] $C_{K} (\la)$ is a linearization of $P(\la)$.
	\end{itemize}
	
\end{theo}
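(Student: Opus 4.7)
The plan is to verify (a) and (b) directly, then deduce (c) as an immediate application of Proposition \ref{prop_lin}.

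For (a), I would observe that $A(\la)$ is a $2\times 2$ block matrix whose $(2,2)$-block is zero and whose off-diagonal blocks $A_{\epsilon,m}(\la)$ and $A_{\eta,p}(\la)^T$ are square. Both $A_{\epsilon,m}(\la)$ and $A_{\eta,p}(\la)$ are block upper triangular with $-I$ on the block diagonal, hence unimodular with determinant $\pm 1$. Swapping the two block rows of $A(\la)$ produces a block triangular matrix whose determinant equals, up to a sign, $\det A_{\epsilon,m}(\la)\cdot\det A_{\eta,p}(\la)^T$, a nonzero constant. Thus $A(\la)$ is unimodular, regardless of $M_{11}(\la)$.

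For (b), the key tools are the kernel identities $L_\epsilon(\la)\Lambda_\epsilon(\la)=0$ and $L_\eta(\la)\Lambda_\eta(\la)=0$, tensored with $I_m$ and $I_p$ respectively. Separating off the last block entry of each $\Lambda$ as in \eqref{eq.linearrelFRO}--\eqref{eq_invab}, they yield closed-form expressions for $A_{\epsilon,m}(\la)^{-1}B_{\epsilon,m}(\la)$ and (after transposition) for $B_{\eta,p}(\la)^T A_{\eta,p}(\la)^{-T}$ as vectors of monomial blocks, avoiding any explicit inversion. I would then invert $A(\la)$ in block form; because the $(2,2)$-block is zero, a one-line computation gives
\begin{equation*}
A(\la)^{-1}=\begin{bmatrix} 0 & A_{\epsilon,m}(\la)^{-1} \\ A_{\eta,p}(\la)^{-T} & -A_{\eta,p}(\la)^{-T}M_{11}(\la)A_{\epsilon,m}(\la)^{-1}\end{bmatrix}.
\end{equation*}
Substituting into the Schur complement $D(\la)+C(\la)A(\la)^{-1}B(\la)$ and applying the two monomial identities, the four resulting terms reassemble precisely into $(\Lambda_\eta(\la)^T\otimes I_p)(\la M_1+M_0)(\Lambda_\epsilon(\la)\otimes I_m)$, which equals $P(\la)$ by \eqref{eq.plambdaforBKL}.

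Statement (c) follows at once from (a), (b), and Proposition \ref{prop_lin}: $C_K(\la)$ is a linear polynomial system matrix whose state matrix $A(\la)$ is unimodular and whose transfer function matrix is $P(\la)$. The main obstacle I anticipate is the sign and index bookkeeping in (b): I must check that the four pieces produced by the block expansion of $\la M_1+M_0$ against $A(\la)^{-1}$ collapse exactly into the bilinear form $\Lambda_\eta^T(\la M_1+M_0)\Lambda_\epsilon$ with no stray terms. The monomial identities bypass any explicit matrix inversion and play the same role here that \eqref{eq.linearrelFRO} and its reversal counterpart played for the Frobenius companion form.
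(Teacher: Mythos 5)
Your proposal is correct and follows essentially the same route as the paper: the paper also writes down the explicit block inverse $A(\la)^{-1}$ (with the zero $(1,1)$-block, exploiting that the $(2,2)$-block of $A(\la)$ vanishes), obtains $A_{\epsilon,m}(\la)^{-1}B_{\epsilon,m}(\la)=-(\la\Lambda_{\epsilon-1}(\la)\otimes I_m)$ and $B_{\eta,p}(\la)^{T}A_{\eta,p}(\la)^{-T}=-(\la\Lambda_{\eta-1}(\la)^{T}\otimes I_p)$ from the kernel identities $L_\epsilon(\la)\Lambda_\epsilon(\la)=0$ and $L_\eta(\la)\Lambda_\eta(\la)=0$, expands the Schur complement so that the four terms reassemble into $(\Lambda_\eta(\la)^{T}\otimes I_p)(\la M_1+M_0)(\Lambda_\epsilon(\la)\otimes I_m)=P(\la)$, and deduces (c) from Proposition \ref{prop_lin}; your block-row-swap justification of (a) just fills in a step the paper dismisses as obvious.
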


\begin{proof} Statement $\rm (c)$ follows from $\rm(a)$ and $\rm(b)$  and Proposition \ref{prop_lin}. Since (a) is obvious, it only remains to prove $\rm(b)$. First, we write
	\begin{align*}
	A(\la)^{-1} & := \begin{bmatrix}
	0 & A_{\epsilon,m}(\la)^{-1}  \\
	A_{\eta,p}(\la)^{-T}   & -A_{\eta,p}(\la)^{-T} M_{11}(\la)A_{\epsilon,m}(\la)^{-1}
	\end{bmatrix} \\ &
	= \begin{bmatrix}
	I_{\epsilon m} & 0 \\
	0 & A_{\eta,p}(\la)^{-T}
	\end{bmatrix} \begin{bmatrix}
	0 & I_{\epsilon m} \\
	I_{\eta p} & - M_{11}(\la)
	\end{bmatrix} \begin{bmatrix}
	I_{\eta p} & 0 \\
	0 & A_{\epsilon,m}(\la)^{-1}
	\end{bmatrix}.
	\end{align*}
	Now, observe that $\begin{bmatrix}A_{\epsilon,m}(\la) & B_{\epsilon,m}(\la)\end{bmatrix}(\Lambda_{\epsilon}(\la)\otimes I_m)=0$ and, thus,
	$$ A_{\epsilon,m}(\la) (\lambda\Lambda_{\epsilon-1}(\la)\otimes I_m)+ B_{\epsilon,m}(\la)=0.$$ Therefore,
	\begin{equation*}
	A_{\epsilon,m}(\la)^{-1}B_{\epsilon,m}(\la)=- (\lambda\Lambda_{\epsilon-1}(\la)\otimes I_m).
	\end{equation*}
	Analogously,
	\begin{equation*}
	B_{\eta,p}(\la)^{T}A_{\eta,p}(\la)^{-T}=- (\lambda\Lambda_{\eta-1}(\la)^{T}\otimes I_p).
	\end{equation*}
	Thus, the transfer function matrix of $C_K(\la)$ with $A(\la)$ as state matrix is, taking into account \eqref{eq_partition1} and \eqref{eq_partition11},
	{\small\begin{align*}
		&D(\la) + C(\la)A(\la)^{-1}B(\la)  \\ & =
		M_{22}(\la)  - \begin{bmatrix}
		M_{21}(\la) &  B_{\eta,p}(\la)^{T}
		\end{bmatrix} \begin{bmatrix}
		I_{\epsilon,m} & 0 \\
		0 & A_{\eta,p}(\la)^{-T}
		\end{bmatrix} \begin{bmatrix}
		0 & I_{\epsilon m} \\
		I_{\eta p} & - M_{11}(\la)
		\end{bmatrix} \begin{bmatrix}
		I_{\eta p} & 0 \\
		0 & A_{\epsilon,m}(\la)^{-1}
		\end{bmatrix} \begin{bmatrix}
		M_{12}(\la) \\  B_{\epsilon,m} (\la)
		\end{bmatrix}  \\ & =
		M_{22}(\la) - \begin{bmatrix}
		M_{21}(\la) & - (\lambda\Lambda_{\eta-1}(\la)^{T}\otimes I_p)
		\end{bmatrix} \begin{bmatrix}
		0 & I_{\epsilon m} \\
		I_{\eta p} & - M_{11}(\la)
		\end{bmatrix} \begin{bmatrix}
		M_{12}(\la) \\
		- (\lambda\Lambda_{\epsilon-1}(\la)\otimes I_m)
		\end{bmatrix}  \\ & =
		M_{22}(\la) +  (\lambda\Lambda_{\eta-1}(\la)^{T}\otimes I_p)M_{12}(\la) + M_{21}(\la) (\lambda\Lambda_{\epsilon-1}(\la)\otimes I_m) + (\lambda\Lambda_{\eta-1}(\la)^{T}\otimes I_p)M_{11}(\la) (\lambda\Lambda_{\epsilon-1}(\la)\otimes I_m)  \\ & =
		(\Lambda_{\eta}(\la)^{T}\otimes I_p) \begin{bmatrix}
		M_{11}(\la) & M_{12}(\la) \\
		M_{21}(\la) & M_{22}(\la)
		\end{bmatrix} (\Lambda_{\epsilon}(\la)\otimes I_m) = P(\la).
		\end{align*}} \end{proof}
	
\subsection{Reversal of block Kronecker linearizations as Rosenbrock's system matrices}	

We can consider the following partition for $\rev_1 C_{K}(\la):$

\begin{equation}\label{eq_rev_BK}
\rev_1  C_{K} (\la) =
\left[
\begin{array}{c|cc}
\widehat{M}_{11}(\la) & \widehat{M}_{12}(\la) &  \widehat{B}_{\eta,p}(\la)^{T}  \\ \hline
\widehat{M}_{21}(\la) & \widehat{M}_{22}(\la) &  \widehat{A}_{\eta,p}(\la)^{T}  \\
\widehat{B}_{\epsilon,m}(\la) &   \widehat{A}_{\epsilon,m}(\la) & 0
\end{array}
\right]:=\begin{bmatrix}
D_r(\la) & -C_r(\la) \\
B_r(\la) & A_r(\la)
\end{bmatrix},
\end{equation}
as a linear polynomial system matrix with state matrix $A_{r}(\la)$, where

\begin{equation}\label{eq_rev_epsilon}
 \rev_1 L_{\epsilon}(\lambda)\otimes I_{m}=  \left[\begin{array}{c|cccc}
-\la I_m&  I_m  \\
& -\la I_m &  I_m  \\
& & \ddots & \ddots \\
& & & -\la I_m &   I_m  \\
\end{array} \right] =:  \left[\begin{array}{cc} \widehat{B}_{\epsilon,m}(\la) &   \widehat{A}_{\epsilon,m} (\la)
\end{array} \right] ,
\end{equation}
and
\begin{equation}\label{eq_rev_eta}
 \rev_1 L_{\eta}(\lambda)\otimes I_{p}=  \left[\begin{array}{c|cccc}
-\la I_p&  I_p \\
& -\la I_p &  I_p  \\
& & \ddots & \ddots \\
& & & -\la I_p &  I_p  \\
\end{array} \right] =:  \left[\begin{array}{cc} \widehat{B}_{\eta,p}(\la) &   \widehat{A}_{\eta,p} (\la)
\end{array} \right] .
\end{equation}
Then, we have the following result,  whose proof is completely analogous to that of Theorem \ref{theo_BK_Rosenb} and, so, is omitted.

\begin{theo} \label{theo_BK_Rosenb_rev}
	Let $C_{K}(\la)$ be a block Kronecker pencil as in Definition \ref{def_bk} and $P(\la)$ be the polynomial matrix in \eqref{eq.plambdaforBKL}. Then, the following statements hold:
	\begin{itemize}
		\item[\rm(a)] The submatrix $A_r(\la)$ of $\rev_1 C_{K}(\la)$ as in \eqref{eq_rev_BK} is unimodular.
		
		\item[\rm(b)] The Schur complement of $A_r(\la)$ in $\rev_1 C_{K}(\la)$ is $\rev_{\eta+\epsilon+1} P(\la).$
		
		\item[\rm(c)] $C_{K} (\la)$ is a strong linearization of $P(\la)$.
	\end{itemize}
	
\end{theo}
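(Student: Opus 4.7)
The plan is to mirror the proof of Theorem \ref{theo_BK_Rosenb}, exploiting the fact that $\rev_1 C_K(\la)$ has the same block shape as $C_K(\la)$ but with the roles of the ``leading'' and ``trailing'' corners of the Kronecker blocks swapped. Item $\rm(c)$ will follow from $\rm(a)$ and $\rm(b)$ by applying Proposition \ref{prop_strong_lin}: Theorem \ref{theo_BK_Rosenb} already gives that $C_K(\la)$ is a (plain) linearization of $P(\la)$; $\rm(a)$ implies in particular that $A_r(0)$ is invertible; and $\rm(b)$ identifies the transfer function of $\rev_1 C_K(\la)$ with the polynomial matrix $\rev_{\eta+\epsilon+1}P(\la)$, which is trivially equivalent at $0$ to $\rev_\ell P(\la)$ for $\ell=\eta+\epsilon+1\geq \deg P(\la)$. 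So the real work is in $\rm(a)$ and $\rm(b)$.

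For $\rm(a)$, I would observe that $A_r(\la)$ has block anti-triangular form with the zero block in the $(2,2)$ position. Computing its determinant by the Schur-complement formula applied to that zero block (or by Laplace expansion) gives, up to sign, $\det\widehat{A}_{\epsilon,m}(\la)\cdot \det\widehat{A}_{\eta,p}(\la)$. Both factors are nonzero constants because, by \eqref{eq_rev_epsilon} and \eqref{eq_rev_eta}, $\widehat{A}_{\epsilon,m}(\la)$ and $\widehat{A}_{\eta,p}(\la)$ are block upper-triangular with identity blocks on the diagonal. Consequently $A_r(\la)$ is unimodular for every choice of $\widehat{M}_{22}(\la)$.

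For $\rm(b)$, the key input is the reversal of the kernel identities used in Theorem \ref{theo_BK_Rosenb}. Applying $\rev_\epsilon$ to $(L_\epsilon(\la)\otimes I_m)(\Lambda_\epsilon(\la)\otimes I_m)=0$ gives
\[
\bigl[\widehat{B}_{\epsilon,m}(\la)\;\;\widehat{A}_{\epsilon,m}(\la)\bigr]\,\bigl(\rev_\epsilon\Lambda_\epsilon(\la)\otimes I_m\bigr)=0,
\]
which rearranges to $\widehat{A}_{\epsilon,m}(\la)^{-1}\widehat{B}_{\epsilon,m}(\la) = -(\la\,\rev_{\epsilon-1}\Lambda_{\epsilon-1}(\la)\otimes I_m)$, and an analogous identity holds on the $(\eta,p)$-side. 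Factoring $A_r(\la)^{-1}$ through its triangular factorization exactly as in the proof of Theorem \ref{theo_BK_Rosenb} and substituting these two formulas collapses the four resulting blocks into the expression
\[
(\rev_\eta\Lambda_\eta(\la)^T\otimes I_p)(\la M_0+M_1)(\rev_\epsilon\Lambda_\epsilon(\la)\otimes I_m).
\]
A short direct manipulation, pulling out a factor of $\la^{\eta+\epsilon+1}$ and using the definition \eqref{eq.plambdaforBKL} of $P(\la)$, identifies this Schur complement with $\la^{\eta+\epsilon+1}P(1/\la)=\rev_{\eta+\epsilon+1}P(\la)$, as claimed.

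The main potential obstacle is purely the block-matrix bookkeeping in step $\rm(b)$: expanding $A_r(\la)^{-1}$ produces four cross terms, each matching a different block of the reversed pencil $(\la M_0+M_1)$, and one must check that their sum assembles into the desired bilinear form in $\rev_\eta\Lambda_\eta$ and $\rev_\epsilon\Lambda_\epsilon$. However, no new ideas are required beyond those already used in Theorem \ref{theo_BK_Rosenb}; only the substitution of reversed basis vectors for monomial ones and tracking that the overall power of $\la$ comes out to $\eta+\epsilon+1$. For this reason it is reasonable (as the authors do) to omit the detailed computation and appeal to the analogy.
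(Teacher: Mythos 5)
Your proposal is correct and takes essentially the same route the paper intends: the paper omits this proof as ``completely analogous'' to Theorem~\ref{theo_BK_Rosenb}, and your argument is precisely that analogue (reversed kernel identity giving $\widehat{A}_{\epsilon,m}(\la)^{-1}\widehat{B}_{\epsilon,m}(\la)=-(\la\,\rev_{\epsilon-1}\Lambda_{\epsilon-1}(\la)\otimes I_m)$ and its $(\eta,p)$ counterpart, the block anti-triangular state matrix $A_r(\la)$, and then Proposition~\ref{prop_strong_lin}, which applies since unimodularity of $A_r(\la)$ gives invertibility at $0$ and $\eta+\epsilon+1\geq \deg P(\la)$). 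Only trivial slips: $\widehat{A}_{\epsilon,m}(\la)$ and $\widehat{A}_{\eta,p}(\la)$ are block \emph{lower} bidiagonal with identity diagonal blocks, and $\det A_r(\la)$ should be obtained from the anti-triangular structure (block column swap or Laplace expansion) rather than a Schur complement with respect to the zero block, neither of which affects the argument.
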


\section{Extended block Kronecker linearizations}\label{sec_EBK}

In this section we consider a more general version of the notion of block Kronecker linearization  \cite{fiedler}.
	
	\begin{deff}\label{def_extended_bk}
		Let $\la M_1 + M_0$ be an arbitrary pencil and $Y \in \FF^{\varepsilon m \times\varepsilon m}$ and $ Z \in \FF^{\eta p \times \eta p}$ be arbitrary constant matrices. Then any pencil of the form
		\[
		C_{EK} (\la) =
		\left[
		\begin{array}{c|c}
		 \lambda M_1+M_0  & ( Z  (L_{\eta}(\lambda)\otimes I_{p}))^T\\\hline
		 Y  ( L_{\epsilon}(\lambda)\otimes I_{m})&0
		\end{array}
		\right]
		\>
		\]
		is called an extended block Kronecker pencil. The one-block row and one-block column cases are also included, i.e., the second block row or the second block column can be empty. Note that if $Z=I_{\eta p}$ and $Y=I_{\varepsilon m}$ then $C_{EK} (\la)$ is just a block Kronecker pencil.
	\end{deff}
	

Extended block Kronecker pencils were introduced in \cite[Section 3.3]{fiedler} because though the family of block Kronecker pencils contains, modulo permutations, Fiedler pencils \cite{greeks,dtdm-fiedler,fiedler-original} and generalized Fiedler pencils \cite{greeks}, it is not large enough to include the other families of Fiedler-like pencils (Fiedler pencils with repetition \cite{greeks2} and generalized Fiedler pencils with repetition \cite{large-vector-spaces}). It was proved in \cite{fiedler} that all the Fiedler-like linearizations available in the literature are included modulo permutations in the family of extended block Kronecker pencils (which in turn is included in the much wider family of strong block minimal bases pencils \cite{BKL}). The proofs of these results require to choose highly structured matrices $Y$ and $Z$ in $C_{EK} (\lambda)$ depending on the coefficients of the polynomial matrix to be linearized. The details can be found in \cite{fiedler}.

Note that $C_{EK} (\la)$ in Definition \ref{def_extended_bk} can be written as $$C_{EK} (\la) = \mathrm{diag}(I,Y) \,  C_{K} (\la) \, \mathrm{diag} (I,Z^T),$$ where $C_K (\la)$ is a block Kronecker pencil as in Definition \ref{def_bk}. Thus, if $Y$ and $Z$ are nonsigular, then we get immediately that $C_{EK} (\la)$ is a strong linearization of exactly the same matrix polynomial $P(\la)$ in \eqref{eq.plambdaforBKL} as $C_{K} (\la)$. Our purpose in this section is to prove that we can also write $C_{EK} (\lambda)$ as a polynomial system matrix with unimodular state matrix and transfer function matrix \eqref{eq.plambdaforBKL}.

\subsection{Extended block Kronecker linearizations as Rosenbrock's system matrices}

Recall \eqref{eq_epsilon} and \eqref{eq_eta}, and observe that

\[
 Y  ( L_{\epsilon}(\lambda)\otimes I_{m})=  \left[\begin{array}{c|c} Y A_{\epsilon,m}(\la) &  Y B_{\epsilon,m} (\la)
\end{array} \right] ,
\]
and $Y A_{\epsilon,m}(\la) $ is unimodular if $Y$ is invertible. Analogously,

\[
Z ( L_{\eta}(\lambda)\otimes I_{p})= \left[\begin{array}{c|c} Z A_{\eta,p}(\la) &  Z B_{\eta,p} (\la)
\end{array} \right] ,
\]
and $Z A_{\eta,p}(\la) $ is unimodular if $Z$ is invertible. Then, $C_{EK}(\la)$ can be partitioned as:

\[
C_{EK} (\la) =
\left[
\begin{array}{c;{2pt/2pt}c|c}
	M_{11}(\la) & M_{12}(\la) &  A_{\eta,p}(\la)^{T} Z^{T} \\ \hdashline[2pt/2pt]
	M_{21}(\la) & M_{22}(\la) &  B_{\eta,p}(\la)^{T} Z^{T} \\ \hline
	Y A_{\epsilon,m}(\la) &  Y B_{\epsilon,m}(\la) & 0
\end{array}
\right],
\]
and we set

	\begin{align}\label{eq_partition}
\widetilde	A(\la) &  := \begin{bmatrix}
	M_{11}(\la)  &  A_{\eta,p}(\la)^{T} Z^{T} \\
Y A_{\epsilon,m}(\la)  & 0
	\end{bmatrix}, \quad
	\widetilde B(\la):=\begin{bmatrix}
M_{12}(\la) \\  Y B_{\epsilon,m} (\la)
	\end{bmatrix} , \\
	\widetilde C(\la)& :=-\begin{bmatrix}
	M_{21}(\la) &  B_{\eta,p}(\la)^{T}Z^{T}
	\end{bmatrix}, \quad\text{and} \quad \widetilde D(\la):= M_{22}(\la) .
	\end{align}
Notice that $\widetilde A(\la)$ is unimodular if $Y$ and $Z$ are invertible, for any $	M_{11}(\la)$. With the partition above, we  prove in Theorem \ref{theo_EBK_Rosenb} that $C_{EK} (\la)$ is a linear polynomial system matrix with unimodular state matrix $\widetilde A(\la)$ and  whose transfer function matrix is $P(\la)$ in \eqref{eq.plambdaforBKL}.

\begin{theo} \label{theo_EBK_Rosenb}
	Let $C_{EK}(\la)$ be an extended block Kronecker pencil as in Definition \ref{def_extended_bk}. Assume that $Y$ and $Z$ are invertible. Then, the following statements hold:
	\begin{itemize}
		\item[\rm(a)] The submatrix $\widetilde A(\la)$ of $C_{EK}(\la)$ as in \eqref{eq_partition} is unimodular.
		
		\item[\rm(b)] The Schur complement of $\widetilde A(\la)$ in $C_{EK}(\la)$ is the polynomial matrix $P(\la)$ in \eqref{eq.plambdaforBKL}.
		\item[\rm(c)] $C_{EK} (\la)$ is a linearization of $P(\la)$.
	\end{itemize}

\end{theo}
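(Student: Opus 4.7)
The plan is to reduce the extended case to the block Kronecker case already handled by Theorem \ref{theo_BK_Rosenb}, exploiting the factorization
\[
C_{EK}(\la) \;=\; \operatorname{diag}(I, Y)\; C_{K}(\la)\; \operatorname{diag}(I, Z^{T})
\]
mentioned right before the theorem, where $C_{K}(\la)$ is the block Kronecker pencil obtained by taking $Y=I_{\varepsilon m}$ and $Z=I_{\eta p}$. Part (c) will then follow directly from (a), (b) and Proposition \ref{prop_lin}, so the real work is in (a) and (b).

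For part (a), I would observe that with $A(\la)$ denoting the state matrix in \eqref{eq_partition1} for the plain block Kronecker case, we have the block-diagonal factorization
\[
\widetilde A(\la) \;=\; \begin{bmatrix} I & 0 \\ 0 & Y\end{bmatrix}\, A(\la)\, \begin{bmatrix} I & 0 \\ 0 & Z^{T}\end{bmatrix},
\]
which is immediate from the definitions in \eqref{eq_partition}. Since $A(\la)$ is unimodular by Theorem \ref{theo_BK_Rosenb}(a) and the two outer factors are invertible constant matrices (because $Y$ and $Z$ are assumed invertible), the product $\widetilde A(\la)$ is unimodular.

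For part (b), I would combine the same factorization with the analogous identities $\widetilde B(\la) = \operatorname{diag}(I,Y)\,B(\la)$, $\widetilde C(\la) = C(\la)\,\operatorname{diag}(I, Z^{T})$, and $\widetilde D(\la)=D(\la)$. Inverting the block-diagonal factors in $\widetilde A(\la)$ and substituting gives
\[
\widetilde D(\la) + \widetilde C(\la)\,\widetilde A(\la)^{-1}\,\widetilde B(\la)
\;=\; D(\la) + C(\la)\,A(\la)^{-1}\,B(\la),
\]
because the factors $\operatorname{diag}(I,Z^{T})$ and $\operatorname{diag}(I,Y)$ cancel with their inverses appearing in $\widetilde A(\la)^{-1}$. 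By Theorem \ref{theo_BK_Rosenb}(b), the right-hand side equals $P(\la)$ in \eqref{eq.plambdaforBKL}, which is exactly what (b) claims.

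The main obstacle, which is mild, is simply verifying the cancellation carefully in (b): one needs to confirm that the left factor $\operatorname{diag}(I,Y)$ in $\widetilde B(\la)$ pairs with the rightmost factor of $\widetilde A(\la)^{-1}$, and likewise for $Z^{T}$ with the leftmost factor. Once this bookkeeping is done the whole argument is essentially a one-line reduction to Theorem \ref{theo_BK_Rosenb}, and no new computation using the linear relations for $L_{\epsilon}$ and $L_{\eta}$ is needed. Alternatively, if the reader prefers a self-contained argument, one can redo the matrix multiplication used in the proof of Theorem \ref{theo_BK_Rosenb} with $Y A_{\epsilon,m}(\la)$ and $Z A_{\eta,p}(\la)$ in place of $A_{\epsilon,m}(\la)$ and $A_{\eta,p}(\la)$, using that $(Y A_{\epsilon,m}(\la))^{-1} Y B_{\epsilon,m}(\la) = A_{\epsilon,m}(\la)^{-1} B_{\epsilon,m}(\la) = -(\la\Lambda_{\epsilon-1}(\la)\otimes I_{m})$ and the analogous identity on the left; but the factorization approach is cleaner.
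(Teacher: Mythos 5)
Your proposal is correct and follows essentially the same route as the paper: the paper's proof of Theorem \ref{theo_EBK_Rosenb} also uses the factorizations $\widetilde A(\la)=\mathrm{diag}(I,Y)\,A(\la)\,\mathrm{diag}(I,Z^{T})$, $\widetilde B(\la)=\mathrm{diag}(I,Y)B(\la)$, $\widetilde C(\la)=C(\la)\,\mathrm{diag}(I,Z^{T})$, $\widetilde D(\la)=D(\la)$, cancels the constant factors in $\widetilde D(\la)+\widetilde C(\la)\widetilde A(\la)^{-1}\widetilde B(\la)$ to reduce (b) to Theorem \ref{theo_BK_Rosenb}, and obtains (c) from Proposition \ref{prop_lin}. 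Your cancellation bookkeeping is accurate, so no gaps remain.
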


\begin{proof} Statement $\rm (c)$ follows from $\rm(a)$ and $\rm(b)$ and Proposition \ref{prop_lin}. Since (a) is obvious, it only remains to prove $\rm(b)$. For that, we write the matrices in \eqref{eq_partition} as follows:	
	\begin{align*}\label{eq_partition2}
	\widetilde A(\la) & = \begin{bmatrix}
	I_{\eta p}  & 0\\
	0 &  Y
	\end{bmatrix}A(\la)\begin{bmatrix}
	I_{\epsilon m}  & 0\\
	0 &  Z^{T}
	\end{bmatrix}, \quad
\widetilde 	B(\la)=\begin{bmatrix}
	I_{\eta p}  & 0\\
	0 &  Y
	\end{bmatrix}B(\la) , \\
	\widetilde C(\la)& =C(\la)\begin{bmatrix}
	I_{\epsilon m}  & 0\\
	0 &  Z^{T}
	\end{bmatrix} , \quad\text{and} \quad \widetilde D (\la)= D(\la) ,
	\end{align*}
	where $A(\la)$, $B(\la)$, $C(\la)$ and $D(\la)$ are as in \eqref{eq_partition1} and \eqref{eq_partition11}. Then, the transfer function matrix of $C_{EK}(\la)$  with $\widetilde A (\la)$ as state matrix is:
\begin{align*}
&\widetilde D(\la) + \widetilde C(\la)\widetilde A(\la)^{-1}\widetilde B(\la)  \\ &= D(\la) + C(\la)\begin{bmatrix}
I_{\epsilon m}  & 0\\
0 &  Z^{T}
\end{bmatrix} \begin{bmatrix}
I_{\epsilon m}  & 0\\
0 &  Z^{-T}
\end{bmatrix}A(\la)^{-1} \begin{bmatrix}
I_{\eta p}  & 0\\
0 &  Y^{-1}
\end{bmatrix}\begin{bmatrix}
I_{\eta p}  & 0\\
0 &  Y
\end{bmatrix}B(\la) \\ &=
D(\la) + C(\la)A(\la)^{-1}B(\la)=P(\la),
\end{align*}
as in the proof of Theorem \ref{theo_BK_Rosenb}.
 \end{proof}

\subsection{Reversal of extended block Kronecker linearizations as Rosenbrock's system matrices}

We can consider the following partition for $\rev_1 C_{EK}(\la):$

\begin{equation}\label{eq_rev_EBK}
\rev_1  C_{EK} (\la) =
\left[
\begin{array}{c|cc}
\widehat{M}_{11}(\la) & \widehat{M}_{12}(\la) &  \widehat{B}_{\eta,p}(\la)^{T} Z^{T} \\ \hline
\widehat{M}_{21}(\la) & \widehat{M}_{22}(\la) &  \widehat{A}_{\eta,p}(\la)^{T} Z^{T} \\
Y \widehat{B}_{\epsilon,m}(\la) &  Y \widehat{A}_{\epsilon,m}(\la) & 0
\end{array}
\right]:=\begin{bmatrix}
\widetilde D_r(\la) & - \widetilde C_r(\la) \\
\widetilde B_r(\la) & \widetilde A_r(\la)
\end{bmatrix},
\end{equation}
as a linear polynomial system matrix with state matrix $\widetilde A_{r}(\la)$, where

\[
Y  ( \rev_1 L_{\epsilon}(\lambda)\otimes I_{m})= \left[\begin{array}{cc} Y \widehat{B}_{\epsilon,m}(\la) &  Y \widehat{A}_{\epsilon,m} (\la)
\end{array} \right] ,
\]
and

\[
Z ( \rev_1 L_{\eta}(\lambda)\otimes I_{p})=  \left[\begin{array}{cc} Z \widehat{B}_{\eta,p}(\la) &  Z \widehat{A}_{\eta,p} (\la)
\end{array} \right] ,
\]
by using the notation in \eqref{eq_rev_epsilon} and \eqref{eq_rev_eta}, respectively. Then, we have the following result, whose proof is omitted since it  easily follows from Theorem \ref{theo_BK_Rosenb_rev}.

\begin{theo} \label{theo_EBK_Rosenb_rev}
	Let $C_{EK}(\la)$ be an extended block Kronecker pencil as in Definition \ref{def_extended_bk} and $P(\la)$ be the polynomial matrix in \eqref{eq.plambdaforBKL}. Assume that $Y$ and $Z$ are invertible. Then, the following statements hold:
	\begin{itemize}
		\item[\rm(a)] The submatrix $\widetilde A_r(\la)$ of $\rev_1 C_{EK}(\la)$ as in \eqref{eq_rev_EBK} is unimodular.
		
		\item[\rm(b)]  The Schur complement of $\widetilde A_r(\la)$ in $\rev_1 C_{EK}(\la)$ is $\rev_{\eta+\epsilon+1} P(\la)$.
		\item[\rm(c)] $C_{EK} (\la)$ is a strong linearization of $P(\la)$.
	\end{itemize}
	
\end{theo}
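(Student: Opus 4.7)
The plan is to mirror the strategy from the proof of Theorem \ref{theo_EBK_Rosenb}, but now starting from Theorem \ref{theo_BK_Rosenb_rev} rather than from Theorem \ref{theo_BK_Rosenb}. The underlying observation is that reversal commutes with multiplication by constant matrices: since $C_{EK}(\la) = \mathrm{diag}(I,Y) \, C_K(\la) \, \mathrm{diag}(I,Z^T)$, where $C_K(\la)$ is the block Kronecker pencil obtained from $C_{EK}(\la)$ by setting $Y=I$ and $Z=I$, we get $\rev_1 C_{EK}(\la) = \mathrm{diag}(I,Y) \, \rev_1 C_K(\la) \, \mathrm{diag}(I,Z^T)$. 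The partition in \eqref{eq_rev_EBK} is compatible with this factorization.

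For part (a), I would note that with this factorization the state matrix reads
\[
\widetilde A_r(\la) = \begin{bmatrix} I & 0 \\ 0 & Y \end{bmatrix} A_r(\la) \begin{bmatrix} I & 0 \\ 0 & Z^T \end{bmatrix},
\]
where $A_r(\la)$ is the state matrix of $\rev_1 C_K(\la)$ in \eqref{eq_rev_BK}, which is unimodular by Theorem \ref{theo_BK_Rosenb_rev}(a). Since $Y$ and $Z$ are assumed invertible, the product is unimodular as well.

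For part (b), I would write $\widetilde B_r(\la) = \mathrm{diag}(I,Y)\, B_r(\la)$, $\widetilde C_r(\la) = C_r(\la)\, \mathrm{diag}(I,Z^T)$, and $\widetilde D_r(\la) = D_r(\la)$, with the notation of \eqref{eq_rev_BK}. Then the constant factors $\mathrm{diag}(I,Y)$ and $\mathrm{diag}(I,Z^T)$ inserted into $\widetilde A_r(\la)^{-1}$ cancel exactly as in the proof of Theorem \ref{theo_EBK_Rosenb}, giving
\[
\widetilde D_r(\la) + \widetilde C_r(\la)\, \widetilde A_r(\la)^{-1}\, \widetilde B_r(\la) = D_r(\la) + C_r(\la)\, A_r(\la)^{-1}\, B_r(\la) = \rev_{\eta+\epsilon+1} P(\la),
\]
where the last equality is Theorem \ref{theo_BK_Rosenb_rev}(b). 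Finally, part (c) follows immediately: applying Proposition \ref{prop_lin} to $\rev_1 C_{EK}(\la)$, parts (a) and (b) show that $\rev_1 C_{EK}(\la)$ is a linearization of $\rev_{\eta+\epsilon+1} P(\la)$; combined with Theorem \ref{theo_EBK_Rosenb}(c), the definition \eqref{eq_strong_lin} of strong linearization (in its unimodular form, with $\ell = \eta+\epsilon+1 \geq \deg P(\la)$) is satisfied.

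There is really no substantive obstacle here: the factorization through constant invertible matrices makes both the unimodularity of the state matrix and the invariance of the Schur complement transparent, and no new nontrivial computation beyond that already carried out for the block Kronecker case is required. If one wanted a proof fully symmetric to that of Theorem \ref{theo_EBK_Rosenb}, the only minor care is to bookkeep that the two diagonal factors sit on opposite sides of $\rev_1 C_K(\la)$ and to make sure the size blocks in \eqref{eq_rev_EBK} match those in the reversed block Kronecker partition, which is immediate from \eqref{eq_rev_epsilon} and \eqref{eq_rev_eta}.
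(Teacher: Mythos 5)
Your proposal is correct and follows essentially the same route as the paper, which omits the proof precisely because it ``easily follows from Theorem \ref{theo_BK_Rosenb_rev}'': you use the factorization $\rev_1 C_{EK}(\la)=\diag(I,Y)\,\rev_1 C_{K}(\la)\,\diag(I,Z^T)$ and cancel the constant factors in the Schur complement, mirroring the proof of Theorem \ref{theo_EBK_Rosenb}. The block bookkeeping and the conclusion of (c) via the unimodular equivalence of $\rev_1 C_{EK}(\la)$ with $\diag(\rev_{\eta+\epsilon+1}P(\la),I)$, with $\ell=\eta+\epsilon+1\geq \deg P(\la)$, are all in order.
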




	\section{A note on the construction of linearizations for rational matrices from linear system matrices of their polynomial parts} \label{sec_rational}
	
	By the division algorithm for polynomials, any scalar rational function $r(\la)$ can be uniquely written as $r(\la) =p(\la) +r_{sp}(\la),$ where $p(\la)$ is a polynomial and $r_{sp}(\la)$ is a strictly proper rational function. That is, the degree of the denominator of $r_{sp}(\la)$ is strictly larger than the degree of its numerator. Therefore, any rational matrix $R(\la)$ can be expressed uniquely as
			\begin{equation} \label{eq.prspdecomposition}
			R(\la) = P(\la) + R_{sp} (\la),
			\end{equation}
			where $P(\la)$ is a polynomial matrix and $ R_{sp} (\la)$ is a strictly proper rational matrix. That is, the entries of $ R_{sp} (\la)$ are strictly proper rational functions.

Recently, several papers have investigated the problem of constructing a linearization of a rational matrix $R(\la)$ from a linearization of $P(\la)$ and a minimal state-space realization of $R_{sp} (\la)$ \cite{Rosen70}. See, for instance, \cite{alam1,alam2,strong,DoMaQu19,su-bai} among other references on this topic. Often, the development of such constructions has required considerable theoretical work, since it was not obvious how to merge the linearization of $P(\la)$ with the minimal state-space realization of $R_{sp} (\la)$. In contrast, if one considers linearizations of $P(\la)$ that are Rosenbrock's system matrices with unimodular state matrices and transfer function matrices equal to $P(\la)$, the construction is very easy. For this purpose, we consider on the one hand a linear Rosenbrock's system matrix
			\[
				L(\la)=\begin{bmatrix}
				A(\la) & B(\la)\\
				-C(\la) & D(\la)
				\end{bmatrix} ,
			\]
			with unimodular state matrix $A(\la)$, and transfer function matrix equal to $P(\la)$. That is,
		$$P(\la) = D(\la)+C(\la)A(\la)^{-1}B(\la).$$
		On the other hand, we consider a minimal state-space realization $$R_{sp} (\la) = C_s (\la I_s - A_s)^{-1} B_s$$ of the strictly proper part $ R_{sp} (\la)$. Then, we construct the following pencil:
			\[
			\mathcal{L}(\la)=\left[
			\begin{array}{cc|c}
			(\la I_s - A_s) & 0 & B_s \\
			0 & A(\la) & B(\la)\\ \hline
			-C_s &-C(\la) & D(\la)
			\end{array}
			\right].
			\]
We obtain that $\mathcal{L}(\la)$ is a linear minimal polynomial system matrix of $R(\la)$ in \eqref{eq.prspdecomposition}, with state matrix $\left[\begin{smallmatrix}
                                                  (\la I_s - A_s) & 0 \\
                                                  0 & A(\la)
                                                \end{smallmatrix} \right]$.
Thus, by Theorem \ref{th:Rosen}, $\mathcal{L}(\la)$ contains the information about finite poles and zeros of $R(\la)$, and is a linearization of $R(\la)$  in the sense of \cite[Definition 3.2]{strong}. More information about different definitions of linearizations of rational matrices and how to construct linear polynomial system matrices that also preserve the pole and zero information at infinity, i.e., the pole and zero information at $0$ of $R(1/\la)$, can be found, for instance, in \cite{strong,local,strongminimal,stronglyminimalSIMAX}.

\section{Conclusions and lines of future research} \label{sec_conclusions} The main message of this paper in that in the analysis of a pencil $L(\la)$ that may be a linearization of a matrix polynomial, it is worth looking for a unimodular submatrix $A(\la)$ of $L(\la)$ and viewing $L(\la)$ as a Rosenbrock's system matrix with state matrix $A(\la)$. In the first place, this allows to identify a matrix polynomial $P(\la)$ for which $L(\la)$ is a linearization just by computing the Schur complement of $A(\la)$ in $L(\la)$. Other advantages of this approach are that it may yield simplified alternative proofs that certain well known linearizations are indeed linearizations, it gives rise to simple recovery rules for eigenvectors and to simple constructions of linearizations of rational matrices whose polynomial part is $P(\la)$. This approach applied to the reversal of $L(\la)$ can also be used to prove that $L(\la)$ is a strong linearization of $P(\la)$, though it usually requires to identify a different submatrix in $\rev_1 L(\la)$. One possible line of future research in this context is to apply the Rosenbrock's system matrix approach in the study of $\ell$-ifications of polynomial matrices \cite{spectral,elification1,elification3,elification2}. Another connected problem that deserves to be investigated is that it may be possible to identify in a pencil several unimodular submatrices, possibly with different sizes (think, for instance, in the Frobenius companion form), which will give rise to several different matrix polynomials with the same eigenvalue information. Finally, we emphasize that this paper has connected two very important historical tools in the theory of rational and polynomial matrices: Rosenbrock's system matrices and Gohberg, Lancaster and Rodman's linearizations and strong linearizations.


\end{document}